\title{Finite Chevalley groups and loop groups}
\author{Masaki Kameko}
\address{Department of Mathematics\\
Faculty of Contemporary Society\\
Toyama University of International Studies\\
65-1 Higashikuromaki\\
Toyama, 930-1292\\ Japan}
\email{kameko@tuins.ac.jp}\urladdr{}
\newtheorem{theorem}{Theorem}[section]
\newtheorem{proposition}{Proposition}[section]
\newtheorem{lemma}{Lemma}[section]
\newtheorem{conjecture}{Conjecture}[section]
\theoremstyle{definition}
\newtheorem{remark}{Remark}[section]
\let\c@proposition=\c@theorem
\let\c@lemma=\c@theorem
\let\c@conjecture=\c@theorem
\let\c@definition=\c@theorem
\let\c@remark=\c@theorem
\newcommand{\spin}{\mathrm{Spin}}
\newcommand{\Loop}{\mathcal{L}}
\newcommand{\closure}{\overline{\mathbb{F}}_p}
\newcommand{\HomotopyFibre}[1]{\mathrm{fib}(#1)}
\newcommand{\MappingTrack}[1]{P_{#1}}
\newcommand{\gr}{\mathrm{gr}\thinspace}
\newcommand{\Cohomology}[1]{H^{*}(#1)}
\newcommand{\Homology}[1]{H_{*}(#1)}
\newcommand{\ReducedHomology}[1]{\tilde{H}_{*}(#1)}
\newcommand{\ReducedCohomology}[1]{\tilde{H}^{*}(#1)}
\newcommand{\holim}{\displaystyle \mathop{\mathrm{holim}}_{\leftarrow}}
\newcommand{\spec}[1]{\mathrm{Spec}(#1)}
\begin{document}

\begin{abstract}
Let $p$, $\ell$ be distinct primes  and let $q$ be a power of $p$. Let $G$ be a connected compact Lie group.
We show that there exists an integer $b$ such that the mod $\ell$ cohomology of the classifying space of a finite Chevalley group $G(\mathbb{F}_q)$  is isomorphic to the mod $\ell$ cohomology of the classifying space of the loop group  $\mathcal{L} G$ for $q=p^{ab}$, $a\geq 1$.
\end{abstract}

\maketitle

\section{Introduction}

Let $p$, $\ell$ be distinct primes  and let $q$ be a power of $p$. We denote by 
$\mathbb{F}_q$ the finite field with $q$-elements. 
Let $G$ be a connected compact Lie group. 
There exists a reductive complex linear algebraic group $G(\mathbb{C})$ associated with $G$, called the complexification of $G$. 
One may consider $G(\mathbb{C})$ as $\mathbb{C}$-rational points of a group scheme  over $\mathbb{C}$ obtained by the base-change of a reductive integral affine group scheme $G_{\mathbb{Z}}$, so-called Chevalley group scheme, with the complex analytic topology.
For a field $k$, taking the $k$-rational points of the group scheme 
$$G_{k}=G_\mathbb{Z}\times_{\spec{\mathbb{Z}}}\spec{k}$$ over $k$, we have the (possibly infinite) Chevalley group $$G(k)=\mathrm{Hom}_{\mathrm{Sch}/k}(\spec{k}, G_{k}),$$
where $\mathrm{Sch}/k$ is the category of schemes over $k$. We consider the Chevalley group $G(k)$ as a discrete group unless otherwise is clear from the context.
Denote by $\closure$ the algebraic closure of the finite field $\mathbb{F}_q$.
We may consider the finite Chevalley group $G(\mathbb{F}_q)$ as the fixed point set $G(\closure)^{\phi^q}$ where
\[
\phi^q:G(\closure)\to G(\closure)
\]
is the Frobenius map induced by the Frobenius homomorphism $\phi^q:\closure\to \closure$ sending $x$ to $x^q$.

In \cite{quillen1972}, Quillen computed the mod $\ell$ cohomology of a finite general linear group $GL_n(\mathbb{F}_q)$. The finite general linear group $GL_n(\mathbb{F}_q)$ is a finite Chevalley group associated with the unitary group $U(n)$.
We recall Quillen's computation from the viewpoint of  the the following Theorem~\ref{Theorem:fm} due to Friedlander \cite[Theorem 12.2]{friedlander1982}, Friedlander-Mislin \cite[Theorem 1.4]{friedlanderMislin1984}.

Throughout the rest of this paper, we fix a connected compact Lie group $G$ and associated reductive integral affine group scheme $G_{\mathbb{Z}}$.
Let $BG^\wedge$ be the Bousfield-Kan $\mathbb{Z}/\ell$-completion of the classifying space $BG$ of the connected compact Lie group $G$.
We write  $\Cohomology{X}$, $\ReducedCohomology{X}$ for the mod $\ell$ cohomology, reduced mod $\ell$ cohomology  of a space $X$, respectively. We also write $\Homology{X}$, $\ReducedHomology{X}$ for the mod $\ell$ homology, reduced mod $\ell$ homology of $X$, respectively.
We denote by $\HomotopyFibre{\alpha}$, $\pi_0:\MappingTrack{\alpha}\to X$ the homotopy fibre, mapping track of a map $\alpha:A\to X$. 
That is, \[
\MappingTrack{\alpha}=\{ (a,\lambda) \in A \times X^{I}\;|\; \alpha(a)=\lambda(1) \},
\]
$
\pi_0((a,\lambda))=\lambda(0)
$
and 
$
\HomotopyFibre{\alpha}=\pi_0^{-1}(*),
$
where $I=[0,1]$ is the unit interval, $X^{I}$ is the set of continuous maps from $I$ to $X$, $*$ is the base-point of $X$.


\begin{theorem}[Friedlander-Mislin]  \label{Theorem:fm} 
There exist maps 
\[
D:BG(\closure) \to BG^\wedge
\]
and 
\[
\phi^q:BG^\wedge \to BG^\wedge
\]
satisfying the following three conditions{\rm :}\\
{\rm (1)}
The induced homomorphism 
$
D^{*}:\Cohomology{BG^\wedge}\to \Cohomology{BG(\closure)}
$
is an isomorphism.\\
{\rm (2)}
 $\phi^q \circ D = D \circ \phi^q$ where 
$
\phi^q:BG(\closure) \to BG(\closure)
$
is the Frobenius map induced by the Frobenius homomorphism $\phi^q:\closure\to \closure$. \\
{\rm (3)} There exists a map 
$
\HomotopyFibre{D_q} \to \HomotopyFibre{\Delta}
$
induces an isomorphism 
$
\Cohomology{\HomotopyFibre{\Delta}} \to\Cohomology{\HomotopyFibre{D_q}},
$
where the above map is obtained from the following  homotopy commutative diagram by choosing a suitable homotopy{\rm :}
\[
\begin{diagram}
\node{BG(\mathbb{F}_q)} \arrow{e}\arrow{s,l}{D_q} \node{BG^\wedge}\arrow{s,r}{\Delta} \\
\node{BG^\wedge} \arrow{e,t}{(1\times \phi^q) \circ \Delta} \node{BG^\wedge \times BG^\wedge,}
\end{diagram}
\]
where $D_q=D\circ i_q$, $i_q=BG(\mathbb{F}_q)\to BG(\closure)$ is the map induced by the inclusion of $\mathbb{F}_q$ into $\closure$ and 
$\Delta:BG^\wedge \to BG^\wedge \times BG^\wedge$ is the diagonal map.
\end{theorem}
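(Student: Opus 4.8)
The plan is to produce $BG(\closure)$ together with its Frobenius endomorphism by means of \'etale homotopy theory, to compare the $\mathbb{Z}/\ell$-completed \'etale homotopy type over $\closure$ with the one over $\mathbb{C}$ — which recovers $BG$, hence $BG^\wedge$ — and then to recover the finite group $G(\mathbb{F}_q)=G(\closure)^{\phi^q}$ from $G(\closure)$ as a homotopy fixed-point construction via Lang's theorem. Conditions {\rm (1)} and {\rm (2)} are then essentially formal consequences of Friedlander's comparison theorems; the real work is {\rm (3)}, where the key is to carry out the Lang construction at the level of $\ell$-complete \'etale homotopy types rather than with discrete groups.

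For {\rm (1)}: attach to $G_{\mathbb{Z}}$ the bar-construction simplicial scheme $B_\bullet G_{\mathbb{Z}}$ and form the $\mathbb{Z}/\ell$-completed \'etale homotopy types of its base changes. Over $\mathbb{C}$ this is $BG^\wedge$, since $BG(\mathbb{C})\simeq BG$ (because $G(\mathbb{C})$ deformation retracts onto $G$) and \'etale cohomology agrees with singular cohomology with finite coefficients; invariance of \'etale cohomology under extension of algebraically closed base fields of characteristic different from $\ell$ then identifies it, over $\closure$, with $BG^\wedge$ as well. On the other hand, there is a natural comparison map from the discrete classifying space $BG(\closure)$ into this \'etale homotopy type (obtained by regarding $\closure$-rational points as morphisms of schemes), and a theorem of Friedlander — which for $G=GL_n$ is Quillen's Brauer-lift computation, and which is where the hypothesis $\ell\neq p$ enters essentially — asserts that this map is a mod $\ell$ cohomology isomorphism. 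Composing gives $D$, with $D^{*}$ an isomorphism.

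For {\rm (2)}: the Frobenius $\phi^q\colon\closure\to\closure$ induces a self-map of $B_\bullet G_{\closure}$, hence of its completed \'etale homotopy type, compatible under the map of the previous paragraph with the Frobenius on $BG(\closure)$; transporting it across the identification with $BG^\wedge$ produces $\phi^q\colon BG^\wedge\to BG^\wedge$ with $\phi^q\circ D=D\circ\phi^q$. (One may, if desired, further identify this self-map with an unstable Adams operation acting on $H^{2n}(BG^\wedge)$ by multiplication by $q^n$, but this refinement plays no role here.)

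For {\rm (3)}: Lang's theorem says that the Lang morphism $\mathcal{L}\colon G_{\closure}\to G_{\closure}$, $g\mapsto g^{-1}\phi^q(g)$, is a finite \'etale $G(\mathbb{F}_q)$-torsor; realised on $\mathbb{Z}/\ell$-completed \'etale homotopy types and combined with the identifications above, this exhibits $BG(\mathbb{F}_q)^\wedge$ as the homotopy equaliser of $\mathrm{id}$ and $\phi^q$ on $BG^\wedge$, i.e.\ as the homotopy pullback of $BG^\wedge\xrightarrow{(1\times\phi^q)\circ\Delta}BG^\wedge\times BG^\wedge\xleftarrow{\Delta}BG^\wedge$, the fibre over the right-hand factor being $\HomotopyFibre{\Delta}\simeq\Omega BG^\wedge$. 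Since $BG^\wedge$ is simply connected of finite type over $\mathbb{Z}/\ell$, this homotopy pullback is a genuine fibration; comparing it with the fibration determined by $D_q\colon BG(\mathbb{F}_q)\to BG^\wedge$, using the mod $\ell$ isomorphism $BG(\mathbb{F}_q)\to BG(\mathbb{F}_q)^\wedge$ and a Zeeman-style comparison of Serre spectral sequences over the base $BG^\wedge$, yields the homotopy commutative square of the statement together with the required mod $\ell$ cohomology isomorphism $\HomotopyFibre{D_q}\to\HomotopyFibre{\Delta}$. I expect the main obstacle to be precisely the need to carry out this step \'etale-homotopically: one \emph{can} form the homotopy equaliser of $\mathrm{id}$ and $\phi^q$ directly on the discrete space $BG(\closure)$ — by Lang's theorem its $\phi^q$-twisted-conjugation groupoid is connected with automorphism group $G(\mathbb{F}_q)$, so it is already equivalent to $BG(\mathbb{F}_q)$ — but the relevant leg is then a fibration whose fibre is the huge discrete coset set $G(\closure)/G(\mathbb{F}_q)$, on which $\pi_1$ acts far from nilpotently, so $\mathbb{Z}/\ell$-completion cannot be propagated through it and nothing is gained; it is only after replacing $BG(\closure)$ by the $\ell$-complete \'etale homotopy type of $B_\bullet G_{\closure}$, where the fibre becomes $\Omega BG^\wedge$ and the base is simply connected, that the fibre lemma of Bousfield and Kan applies and the comparison with $BG^\wedge$ closes.
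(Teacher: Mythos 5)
The paper does not prove this theorem itself: it is quoted from Friedlander \cite[Theorem 12.2]{friedlander1982} and Friedlander--Mislin \cite[Theorem 1.4]{friedlanderMislin1984}, and the surrounding Remarks~\ref{Remark:strict1} and~\ref{Remark:strict2} only fix a particular model for $BG^\wedge$ and $D$ (the geometric realization of the homotopy inverse limit of the $\mathbb{Z}/\ell$-completed \'etale homotopy type of $BG_{\closure}$, and the map induced by the homomorphism of group schemes $G(k)_k \to G_k$) so that $\phi^q$ is a genuine automorphism with $(\phi^q)^e = \phi^{q^e}$ and $\phi^q \circ D = D \circ \phi^q$ holding strictly, which is what is exploited downstream. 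Your sketch is a fair reconstruction of the argument in those references --- \'etale realization of the bar construction $B_\bullet G_{\mathbb{Z}}$, the comparison between $\closure$ and $\mathbb{C}$, Friedlander's mod $\ell$ equivalence between $BG(\closure)$ and the completed \'etale homotopy type (the deep input where $\ell\neq p$ enters), and the Lang torsor giving part (3) --- so it tracks what the paper takes as given rather than diverging from it. One minor imprecision: the comparison from $\closure$ to $\mathbb{C}$ is a specialization argument across a mixed-characteristic base (e.g.\ over the Witt vectors of $\closure$), not simple invariance of \'etale cohomology under extension of a single algebraically closed field as your phrasing suggests; this is handled carefully in Friedlander's book.
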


\begin{remark}\label{Remark:strict1}
In \cite[Proposition 8.8]{friedlander1982}, Friedlander constructed a chain of maps between simplicial sets  
$\holim   \thinspace (\mathbb{Z}/\ell)_{\infty}( BG_{\closure})_{\mathrm{et}}$ and $(\mathbb{Z}/\ell)_{\infty} \mathrm{Sing}_{\bullet}(BG(\mathbb{C}))$, 
where $G(\mathbb{C})$ is given the complex analytic topology.
He showed that these maps are weak homotopy equivalences. We take $BG^\wedge$ to be  the geometric realization of the simplicial set $\holim  \thinspace (\mathbb{Z}/\ell)_{\infty}  (BG_{\closure})_{\mathrm{et}}$, so that the Forbenius map $\phi^{q}:BG^\wedge \to BG^\wedge$ is induced by the map defined on $G_{\closure}$. Therefore, the map $\phi^q$ is an automorphism and  there holds \[
\underbrace{\phi^q \circ \dots \circ \phi^q}_{e-\text{times}}=\phi^{q^e}. \]
We emphasize here that the equality holds in the category of sets and maps, not in the  homotopy category.
\end{remark}

\begin{remark} \label{Remark:strict2}
For a discrete group $H$, we may identify  the classifying space $BH$ with  the geometric realization of $\holim  \thinspace (BH_{k})_{\mathrm{et}}$, where $k$ is an algebraically closed field and $H_{k}$ is a group scheme $H \otimes \mathrm{Spec}(k)$.
The map $D$ in Theorem~\ref{Theorem:fm} is induced by the obvious homomorphism of group schemes
\[
G(k)_{k}=\mathrm{Hom}_{Sch/k}(\mathrm{Spec}(k), G_{k}) \otimes \mathrm{Spec}(k) \to G_{k}.
\]
See Friedlander-Mislin \cite[Section 2]{friedlanderMislin1984} for detail.
Thus, we have the equality $\phi^q \circ D=D \circ \phi^q$  in Theorem~\ref{Theorem:fm}. 
\end{remark}


Now, we recall Quillen's computation of the mod $\ell$ cohomology of $GL_{n}(\mathbb{F}_q)$. 
The first part of Quillen's computation is the homotopy theoretical interpretation of the problem.
For a map $f :\thinspace X \to X$, let us define a space $\Loop_f X$  by
\[
\Loop_f X=\{ \lambda \in X^{I} \;|\; \lambda(1)=f(\lambda(0)) \}.
\]
We call this space  the {\it twisted loop space} of $f$ following the terminology of  \cite{kishimoto}. Let $\pi_0:\Loop_{f} X \to X$ be the evaluation map at $0$.
Let $\pi_0:\MappingTrack{\Delta}\to X \times X$ be the mapping track of the diagonal map $\Delta:X \to X \times X$.
Associated with the diagram in Theorem~\ref{Theorem:fm} (3), we have the following fibre square:
\[
\begin{diagram}
\node{\Loop_f X} \arrow{e,t}{g} \arrow{s,r}{\pi_0} \node{\MappingTrack{\Delta}} \arrow{s,r}{\pi_0} \\
\node{X} \arrow{e,t}{(1\times f) \circ \Delta} \node{X\times X,}
\end{diagram}
\]
where  $g$ is given by
\[
g(\lambda)=(\lambda(0), \lambda')\in  X \times (X\times X)^I,
\]
and $\displaystyle \lambda'(t)=(\lambda(\frac{t}{2}), \lambda(1-\frac{t}{2}))$.
Theorem~\ref{Theorem:fm} (3) implies that  $\Cohomology{\Loop_f X}$ is isomorphic to $\Cohomology{BG(\mathbb{F}_q)}$ for $X=BG^\wedge$, $f=\phi^q$.
Thus, the computation of the mod $\ell$ cohomology of a finite Chevalley group is nothing but the computation of the mod $\ell$ cohomology of  the twisted loop space $\Loop_f X$.  

The second part of Quillen's computation is the computation using the Eilenberg-Moore spectral sequence.
For a twisted loop space,  there exists the Eilenberg-Moore spectral sequence converging to the associated graded algebra of the mod $\ell$ cohomology of the twisted loop space  $\Loop_f X.$
Let us write $A$ for $\Cohomology{X}$. 
The  $E_2$-term of the Eilenberg-Moore spectral sequence is given by 
$
\mathrm{Tor}_{A \otimes A}(A, A).
$
 If   the induced homomorphism 
$
{f}^*:A \to A
$
is the identity homomorphism and if $A$ is a polynomial algebra, then the above $E_2$-term is a polynomial tensor exterior algebra  $A \otimes V$ where $V=\mathrm{Tor}_{A}(\mathbb{Z}/\ell, \mathbb{Z}/\ell)$, 
and since as an algebra over $\mathbb{Z}/\ell$, it is generated by $\mathrm{Tor}^0_{A\otimes A}(A, A)$ 
and $\mathrm{Tor}^{-1}_{A\otimes A}(A,A)$, the spectral sequences collapses at the $E_2$-level.


On the other hand, it is well-known that there exists a homotopy equivalence between the classifying space of the loop group $\Loop G$ and the free loop space $\Loop BG$, where
\[
\Loop X=\{\lambda\in X^{I} \;|\; \lambda(1)=\lambda(0) \}.
\]
See Proposition 2.4 in \cite{atiyahBott}.
For the free loop space $\Loop BG$, 
we have the following fibre square:
\[
\begin{diagram}
\node{\Loop X} \arrow{e} \arrow{s,l}{\pi_0} \node{\MappingTrack{\Delta}}\arrow{s,r}{\pi_0} \\
\node{X} \arrow{e,t}{\Delta} \node{X\times X,}
\end{diagram}
\]
where $\pi_0$ is the evaluation map at $0$, so that $\pi_0(\lambda)=\lambda(0)$, and $\pi_0:\MappingTrack{\Delta} \to X \times X$ is the mapping track of the diagonal map $\Delta:X \to X\times X$.
As in the case of finite Chevalley groups, there exists the Eilenberg-Moore spectral sequence 
\[
\mathrm{Tor}_{A\otimes A} (A, A) \Rightarrow \gr   \Cohomology{\Loop BG}.
\]
Thus, it is easy to see that if $A=\Cohomology{BG}$ is a polynomial algebra, the $E_2$-term of the spectral sequence is equal to the polynomial tensor exterior algebra $A \otimes V=A \otimes \mathrm{Tor}_{A}(\mathbb{Z}/\ell,\mathbb{Z}/\ell)$, the spectral sequence collapses at the $E_2$-level as in the case of finite Chevalley groups.
Therefore, if  $\Cohomology{BG}$ is a polynomial algebra, 
the mod $\ell$ cohomology of the free loop space of the classifying space $BG$ is isomorphic to the mod $\ell$ cohomology of the finite Chevalley group $G(\mathbb{F}_q)$ as  a graded $\mathbb{Z}/\ell$-module.

Even if $\Cohomology{BG}$ is not a polynomial algebra over $\mathbb{Z}/\ell$,  if the induced homomorphism ${\phi^q}^*:A\to A$ is the identity homomorphism, $E_2$-terms of the above Eilenberg-Moore spectral sequences are the same.
Observing this phenomenon, Tezuka in \cite{tezuka1999} asked  the following:
\begin{conjecture}\label{t}
If $\ell | q-1$ (resp. $4 | q-1$) when $\ell$ is odd (resp. even), 
there exists a ring isomorphism 
between $\Cohomology{BG(\mathbb{F}_q)}$ and $\Cohomology{\Loop BG}$.
\end{conjecture}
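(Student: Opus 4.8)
The plan is to carry out Quillen's two-step argument recalled above for an arbitrary connected compact Lie group $G$, using the divisibility hypothesis to force the Frobenius to act trivially on mod $\ell$ cohomology. As explained above, Theorem~\ref{Theorem:fm}(3) yields a mod $\ell$ cohomology isomorphism $\Cohomology{BG(\mathbb{F}_q)}\cong\Cohomology{\Loop_{\phi^q}BG^\wedge}$ which is realized by a map of spaces, so in particular is a ring isomorphism; and since $BG\to BG^\wedge$ is a mod $\ell$ equivalence and the free loop space functor preserves mod $\ell$ equivalences between simply connected spaces, $\Cohomology{B\Loop G}=\Cohomology{\Loop BG}=\Cohomology{\Loop BG^\wedge}=\Cohomology{\Loop_{\mathrm{id}}BG^\wedge}$. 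Thus it is enough to construct a ring isomorphism $\Cohomology{\Loop_{\phi^q}BG^\wedge}\cong\Cohomology{\Loop_{\mathrm{id}}BG^\wedge}$.

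The first step is a lemma: under the hypothesis, ${\phi^q}^{*}$ is the identity on $A=\Cohomology{BG^\wedge}$. By Friedlander's comparison theorem $\phi^q$ on $BG^\wedge$ is the unstable Adams operation $\psi^q$, and its restriction to the completed classifying space $BT^\wedge$ of a maximal torus $T\subset G$, a product of copies of $K(\mathbb{Z}_\ell,2)$, is multiplication by $q$ on $\pi_2$, hence by $q^i$ on $H^{2i}$. The divisibility hypothesis says exactly that $q$ lies in the group of principal units of $\mathbb{Z}_\ell^{\times}$, i.e.\ in $1+\ell\mathbb{Z}_\ell$ for $\ell$ odd and in $1+4\mathbb{Z}_2$ for $\ell=2$; so $q^i\equiv 1\pmod{\ell}$ for all $i$ and ${\psi^q}^{*}$ is the identity on $\Cohomology{BT^\wedge}$. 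Since ${\phi^q}^{*}$ commutes with the Steenrod operations and with the restriction and transfer maps along $BT^\wedge\to BG^\wedge$, I would propagate this to $A$: when $\ell\nmid|W|$ the algebra $A$ injects into $\Cohomology{BT^\wedge}$ and we are done, and in general one uses Quillen stratification to reduce the remaining part of $A$ to the classifying spaces of elementary abelian $\ell$-subgroups, on each of which $\phi^q$ again acts through powers of $q$ and through Steenrod operations. For $\ell=2$ this step is automatic; the full strength $4\mid q-1$ is held in reserve for the multiplicative extensions below.

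With ${\phi^q}^{*}=\mathrm{id}$ in hand, the Eilenberg--Moore spectral sequences of the two fibre squares have the same $E_2$-term $\mathrm{Tor}_{A\otimes A}(A,A)$, with the same bigrading and the same algebra structure; when $A$ is polynomial this is $A\otimes\mathrm{Tor}_A(\mathbb{Z}/\ell,\mathbb{Z}/\ell)$ and both spectral sequences collapse at $E_2$ by the generation argument recalled above. In general I would prove collapse by naturality: restricting along $BT^\wedge\to BG^\wedge$ and along $BE^\wedge\to BG^\wedge$ for $E$ elementary abelian, the twisted loop space becomes the explicit space $\prod K(\mathbb{Z}/\ell^{v},1)$ with $v=v_\ell(q-1)$, whose cohomology ring one computes by hand and matches against the cohomology $\mathbb{Z}/\ell[c_i]\otimes\Lambda(s_i)$ of $\prod K(\mathbb{Z}_\ell,2)\times\prod K(\mathbb{Z}_\ell,1)$ for the free loop space. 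This is precisely where the hypothesis is used when $\ell=2$: one has $\Cohomology{K(\mathbb{Z}/2^{v},1)}=\mathbb{Z}/2[c]\otimes\Lambda(s)$ only for $v\geq 2$, i.e.\ $4\mid q-1$, whereas for $v=1$ one gets $\mathbb{Z}/2[s]$ with $s^2=c$. Finally, since $H^{*}$ is computed over a field the associated graded rings $\gr\Cohomology{\Loop_{\phi^q}BG^\wedge}$ and $\gr\Cohomology{\Loop_{\mathrm{id}}BG^\wedge}$ coincide, and there remains the multiplicative extension problem, which I would settle by choosing compatible polynomial and exterior generators and computing products of the exterior generators through their transgressive Bocksteins and the remaining Steenrod action, detecting the answer on the maximal torus and on elementary abelian $\ell$-subgroups.

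The main obstacle is the last two points when $\Cohomology{BG}$ is not a polynomial algebra: there is then no formal reason for the Eilenberg--Moore spectral sequence to collapse, and the twisted loop space $\Loop_{\phi^q}BG^\wedge$ and the free loop space $\Loop BG^\wedge$ need not be homotopy equivalent even though $\phi^q$ is always a mod $\ell$ cohomology equivalence of $BG^\wedge$. Already for $G=U(1)$ with $\ell=2$ and $q\equiv 3\pmod{4}$ the resulting rings $\mathbb{Z}/2[s]$ and $\mathbb{Z}/2[c]\otimes\Lambda(s)$ are not isomorphic, so the mod $4$ hypothesis cannot be weakened. Controlling the collapse and the extensions uniformly in $G$ is the crux; the tools I would rely on are the comparison with the maximal torus and with elementary abelian $\ell$-subgroups together with the fact that the hypothesis makes $q$ a principal $\ell$-adic unit, so that the entire family $\psi^{q^{n}}$ of Frobenius maps is $\ell$-adically close to the identity.
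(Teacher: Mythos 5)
The statement you are proving is labelled a \emph{Conjecture} in the paper, and the paper does not prove it: it proves only the weaker Theorem~\ref{Theorem:main}, an isomorphism of graded $\mathbb{Z}/\ell$-\emph{modules} (not rings), and only for exponents $q=p^{ab}$ with $b$ a large auxiliary integer, by an entirely different mechanism (an indirect comparison of Leray--Serre spectral sequences through a third fibration $\Loop_f X\times_X\MappingTrack{\alpha}\to X$, using Lemmas~\ref{Lemma:zero}, \ref{Lemma:identity}, \ref{Lemma:mono}, \ref{Lemma:ss}). Your proposal does not close the conjecture either. You yourself identify the crux --- when $\Cohomology{BG}$ is not polynomial there is no formal reason for the Eilenberg--Moore spectral sequence to collapse, and even granting collapse one must solve the multiplicative extension problem from $\gr\Cohomology{\Loop_f X}$ to $\Cohomology{\Loop_f X}$ --- and you leave both points as things you ``would'' handle by detection on the maximal torus and on elementary abelian $\ell$-subgroups. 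That detection is exactly what fails in the non-polynomial case: Quillen stratification gives only an $F$-isomorphism (injectivity modulo nilpotents), so it cannot see the nilpotent and torsion classes that are the whole difficulty, and the same defect undermines your first lemma, where you propagate ${\phi^q}^*=\mathrm{id}$ from $\Cohomology{BT^\wedge}$ to $A=\Cohomology{BG^\wedge}$: when $\ell$ divides $|W|$ the restriction to $BT^\wedge$ is not injective, and an argument through elementary abelians only controls ${\phi^q}^*$ up to nilpotents. (A continuous action of $1+\ell\mathbb{Z}_\ell$ on a finite-dimensional $\mathbb{Z}/\ell$-vector space can still have image a nontrivial $\ell$-group, so the congruence $q\equiv 1$ alone does not force the identity on all of $A$.)

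There is also a structural obstruction you should weigh against any attempt to realize the isomorphism geometrically: as the paper observes for $G=S^1$, the rational (and integral) cohomologies of $BG(\mathbb{F}_q)$ and $\Loop BG$ differ, so no zig-zag of maps can induce the mod $\ell$ ring isomorphism; any proof must be as indirect as the paper's spectral-sequence comparison, and that comparison, as carried out in Section~\ref{sec3}, only yields additive information. In short: your outline reproduces Quillen's known argument in the polynomial case, correctly isolates where the divisibility hypotheses enter (your $U(1)$, $q\equiv 3\pmod 4$ example is a good sanity check), but the general case remains open both in your proposal and in the paper.
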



In conjunction with this conjecture, in this paper, we prove  the following result:

\begin{theorem}\label{Theorem:main}
There exists an integer $b$ such that, for $q=p^{ab}$ where $a$ is an arbitrary positive integer,  there exists an
 isomorphism of graded $\mathbb{Z}/\ell$-modules
\[
\Cohomology{BG(\mathbb{F}_q)} =\Cohomology{\Loop BG}.
\]
\end{theorem}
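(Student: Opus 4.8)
The plan is to choose the integer $b$ so that the Frobenius induces the identity on mod $\ell$ cohomology, and then to read off the isomorphism from the two Eilenberg--Moore spectral sequences recalled above. Write $A=\Cohomology{BG^\wedge}$; since $\Cohomology{BG^\wedge}\cong\Cohomology{BG}$ and $G$ is a compact Lie group, $A$ is a finitely generated graded $\mathbb{Z}/\ell$-algebra which is of finite type and is generated in degrees $\le N$ for some $N$. A graded algebra endomorphism of $A$ is determined by its restriction to the \emph{finite} set $\bigoplus_{i\le N}A^i$, so the group of graded algebra automorphisms of $A$ is finite. In the model of Remark~\ref{Remark:strict1} the Frobenius $\phi^p\colon BG^\wedge\to BG^\wedge$ is an automorphism, hence a homotopy equivalence, so ${\phi^p}^{*}$ lies in that finite group; let $b$ be its order. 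Using the strict identity $\underbrace{\phi^p\circ\cdots\circ\phi^p}_{ab}=\phi^{p^{ab}}$ of Remark~\ref{Remark:strict1}, we obtain
\[
{\phi^q}^{*}=\bigl({\phi^p}^{*}\bigr)^{ab}=\mathrm{id}_A
\qquad\text{for every }q=p^{ab},\ a\ge 1.
\]

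Fix such a $q$. By Theorem~\ref{Theorem:fm}(3) together with the fibre square identifying the twisted loop space, $\Cohomology{BG(\mathbb{F}_q)}\cong\Cohomology{\Loop_{\phi^q}BG^\wedge}$; and $\Cohomology{\Loop BG}\cong\Cohomology{\Loop BG^\wedge}$, because $BG\to BG^\wedge$ is a mod $\ell$ cohomology isomorphism of simply connected spaces and hence so is the induced map of free loop spaces. Both of these cohomology rings are abutments of Eilenberg--Moore spectral sequences attached to the fibre squares over $BG^\wedge\times BG^\wedge$ displayed above, one with bottom map $(1\times\phi^q)\circ\Delta$ and the other with bottom map $\Delta$; in each case the $E_2$-term is $\mathrm{Tor}_{A\otimes A}(A,A)$. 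Since ${\phi^q}^{*}=\mathrm{id}_A$, the two $A\otimes A$-module structures that enter these $\mathrm{Tor}$ groups coincide --- both are the one induced by the multiplication $A\otimes A\to A$ --- so the two spectral sequences have identical $E_2$-terms, as bigraded modules and even as algebras.

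It remains to show that the two spectral sequences also have the same $E_\infty$-term; granting this, $\gr\Cohomology{BG(\mathbb{F}_q)}$ and $\gr\Cohomology{\Loop BG}$ are isomorphic graded $\mathbb{Z}/\ell$-modules, and since a graded $\mathbb{Z}/\ell$-module of finite type is determined up to isomorphism by its Poincar\'e series, the theorem follows. This last step is the main obstacle, and the difficulty is genuinely at the cochain level: although ${\phi^q}^{*}$ induces the identity on $\Cohomology{BG^\wedge}$, the map $\phi^q$ is \emph{not} homotopic to the identity --- already on a maximal torus it acts on $\ell$-adic cohomology by multiplication by $q$, an element of infinite order in $\mathbb{Z}_\ell^{\times}$ --- so $\Loop_{\phi^q}BG^\wedge$ cannot be replaced by $\Loop BG^\wedge$ up to homotopy. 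I would carry this out by comparing the two-sided bar complexes underlying these spectral sequences --- equivalently, the Hochschild complex of the cochain algebra $C^{*}(BG^\wedge;\mathbb{Z}/\ell)$, untwisted in the case of $\Loop BG$ and with its coefficient bimodule twisted by the quasi-isomorphism of differential graded algebras ${\phi^q}^{*}$ in the case of $\Loop_{\phi^q}BG^\wedge$. The twist alters only the part of the bar differential built from the bimodule action, and on the $E_1$-page it is governed by the map induced by $\phi^q$ on $\Cohomology{BG^\wedge}$, which is the identity; the technical heart is then to propagate this agreement past $d_2$ to all higher differentials, for example by a filtration/perturbation argument on the Hochschild complex or by induction over a principal fibration decomposition of $BG^\wedge$ reducing to Eilenberg--MacLane spaces, and it is there that I expect the real work to lie.
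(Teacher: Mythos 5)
Your proposal correctly reduces the theorem to a comparison of spectral sequences and, to your credit, you put your finger on exactly where the difficulty lies: even when ${\phi^q}^{*}=\mathrm{id}$ on $A$, the map $\phi^q$ is not homotopic to the identity, so the twisted and untwisted spectral sequences can a priori diverge from $E_3$ onward. But you then stop. The final paragraph (``I would carry this out by comparing the two-sided bar complexes\dots it is there that I expect the real work to lie'') is an outline of possible strategies, not a proof, and the step it defers --- showing that all higher differentials agree --- is precisely the theorem. As written this is a genuine gap, not a detail.

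The paper closes the gap by a route you did not consider, and it is worth contrasting. First, the paper works with the \emph{Leray--Serre} spectral sequences of the fibrations $\pi_0\colon\Loop_f X\to X$ and $\pi_0\colon\Loop X\to X$ over $X=BG^\wedge$, not with Eilenberg--Moore. Second, it does not attempt to compare these two spectral sequences directly, head-on. Instead it constructs an auxiliary fibration $\Loop_f X\times_X \MappingTrack{\alpha}\to X$ (with $\alpha=D_{q_k}\colon BG(\mathbb{F}_{q_k})\to BG^\wedge$) together with maps
\[
E_r(\Loop_f X)\xrightarrow{\ (1\times\psi)^*\circ p_1^*\ }E_r(\Loop_f X\times_X \MappingTrack{\alpha})\xleftarrow{\ (1\times\psi)^*\circ\varphi^*\ }E_r(\Loop X),
\]
and then applies a purely formal spectral-sequence comparison (Lemma~\ref{Lemma:ss}): if $\rho'_r$ is injective for all $r\geq 2$ and $\mathrm{Im}\,\rho'_2=\mathrm{Im}\,\rho''_2$, there is a unique morphism $E''_r\to E'_r$ over $E_r$, which is an isomorphism if $\rho''_2$ is injective. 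The required injectivity of $p_1^*\colon E_r(\Loop_f X)\to E_r(\Loop_f X\times_X\MappingTrack{\alpha})$ is Lemma~\ref{Lemma:mono}, proved by a filtration argument on the skeleta of $X$ (Lemma~\ref{Lemma:monoA} and Proposition~\ref{Proposition:monoB}), and it relies on a chain of spaces $A_0\to A_1\to\cdots\to A_k\to A$ such that each $\ReducedCohomology{\HomotopyFibre{\alpha_j}}\to\ReducedCohomology{\HomotopyFibre{\alpha_{j-1}}}$ is zero. The agreement of images at $E_2$ is Lemma~\ref{Lemma:same}, which needs $(1\times(1\times\phi^{q'}))^*$ to be the identity on $\Cohomology{\HomotopyFibre{\Delta\circ D_q}}$ --- a condition on the fibre $\HomotopyFibre{\Delta\circ D_q}$, not merely on $A$.

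This also explains why the paper's integer $b$ is much larger and structured differently from yours. You take $b$ to be the order of ${\phi^p}^{*}$ acting on $A=\Cohomology{BG^\wedge}$. The paper instead sets $V=\Cohomology{G}$ and $b=e_1^{\dim G}e_2$ with $e_1=(\ell\,|GL(V)|)^{2\dim G}$ and $e_2=|GL(V\otimes V)|$, because it needs two distinct divisibility conditions to feed into the two lemmas above: $e_1^{\dim G}$ ensures the ``zero'' property for the chain $A_0\to\cdots\to A_k$ (Lemma~\ref{Lemma:zero}, via primitivity and averaging over Frobenius orbits), and $e_2$ ensures the ``identity'' property on $\Cohomology{\HomotopyFibre{\Delta\circ D_q}}$ (Lemma~\ref{Lemma:identity}). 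Nowhere does the paper need, or use, that ${\phi^q}^{*}=\mathrm{id}$ on $A$ itself. So your choice of $b$ is not even the right input for the strategy that does work, and the reduction you made is not the reduction the paper makes. The paper also explicitly remarks that the isomorphism it produces ``could not be realized by a chain of maps,'' which cuts against the hope, implicit in your final paragraph, that some more clever algebraic model (bar complex, Hochschild complex, perturbation theory) will rigidly identify the two objects.

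In short: your setup is reasonable and your diagnosis of the obstacle is accurate, but the proposal stops exactly where the real argument begins, and the route it sketches for overcoming the obstacle is not the one the paper takes.
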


\begin{remark}
Although we give an example of the integer $b$ in Theorem~\ref{Theorem:main} in Section~\ref{sec2} as a function of the graded $\mathbb{Z}/\ell$-module
$\Cohomology{G}$, it is not at all the best possible.
\end{remark}

\begin{remark}
If $H^{*}(BG)$ is not a polynomial algebra, it is not easy to compute the mod $\ell$ cohomology of $BG(\mathbb{F}_q)$ and $\Loop BG$. The only computational results in the literature  are the computation of the mod 2 cohomology of $B\spin_{10}(\mathbb{F}_q)$ and $\Loop B \spin(10)$ in \cite{kleinerman1982} and \cite{kuribayashiMimuraNishimoto2006} for $\ell=2$ and 
the mod $3$ cohomology of $\Loop BPU(3)$ for $\ell=3$ in \cite{kuribayashiMimuraNishimoto2006}.
\end{remark}

When we want to  show that the cohomology of a space $X$ is isomorphic to the cohomology of another space $Y$, we usually try to construct a chain of maps 
\[
X=X_0 \stackrel{f_0}{\longleftarrow} X_1 \stackrel{f_1}{\longrightarrow} X_2 \longleftarrow \dots \longleftarrow X_n \stackrel{f_{n}}{\longrightarrow} X_{n+1} =Y
\]
such that maps $f_k$'s induce isomorphisms in mod $\ell$ cohomology.
For example, Theorem~\ref{Theorem:fm} is proved by this method. 
However, when we try to prove Theorem~\ref{Theorem:main} or Conjecture~\ref{t}, we can not construct such a chain of maps.
Consider the case $G=S^1$.
Then, we have $G(\mathbb{F}_q)=\mathbb{Z}/(q-1)$, $G(\mathbb{C})=\mathbb{C} \backslash \{0\}$.
So, we have $H^{*}(BG(\mathbb{F}_q);\mathbb{Q})=\mathbb{Q}$, $H^{*}(\Loop BG;\mathbb{Q})=\mathbb{Q}[y] \otimes \Lambda(x)$.
If there exists a chain of maps such as above, then they also induce isomorphisms of Bockstein spectral sequences. This contradicts  the above observation on the rational (and integral) cohomology
of  $BG(\mathbb{F}_q)$ and $\Loop BG$.

Thus, in the proof of Theorem~\ref{Theorem:main}, we construct maps which induce monomorphisms among Leray-Serre spectral sequences associated with fibrations
$\pi_0:\Loop_f X \to X$, $\pi_0:\Loop X \to X$, $\pi_0:\Loop_{f} X \times_{X} \MappingTrack{\alpha} \to X$, where $X=BG^\wedge$, $f=\phi^q$ and $\alpha:A \to X$ is a  certain map we define in Section~\ref{sec3}. 
By 
comparing the image of Leray-Serre spectral sequences, we construct an isomorphism between 
Leray-Serre spectral sequences associated with fibrations $\pi_0:\Loop_f X \to X$ and  $\pi_0:\Loop X \to X$. This isomorphism could not be realized by a chain of maps. 
We announced and outlined the proof of Theorem~\ref{Theorem:main} in \cite{kameko}. By choosing $\phi^q$ and $D$ as in Remarks~\ref{Remark:strict1}, \ref{Remark:strict2}, in this paper, we can give a simpler proof for Theorem~ \ref{Theorem:main}.

In Section~\ref{sec2}, we define the integer $b$ as a function of a graded $\mathbb{Z}/\ell$-module $\Cohomology{G}$.
In Section~\ref{sec3}, we give a proof of Theorem~\ref{Theorem:main} assuming Lemma~\ref{Lemma:mono}.
In Section~\ref{sec4}, we prove Lemma~\ref{Lemma:mono}.

Since there exists no map realizing the 
isomorphism between $\Cohomology{BG(\mathbb{F}_q)}$ and $\Cohomology{\Loop BG}$, it is difficult to believe the existence of such isomorphism for arbitrary connected compact Lie group $G$. 
It is my pleasure to thank M. Tezuka for informing me of Conjecture~\ref{t}.
The author is partially supported by Japan Society for the Promotion of Science, Grant-in-Aid for Scientific Research (C) 19540105.


\section{The integer $b$} \label{sec2}

In this section, we define the integer $b$ in Theorem~\ref{Theorem:main}.
We define the integer $b$ as 
\[
b={e_1}^{\dim G} {e_2}
\]
and we define $e_1$, $e_2$ as follows:
For the sake of notational simplicity, let  $V=\Cohomology{G}$. We have isomorphisms 
\[
V=H^{*}_{\mathrm{et}}(G_{\closure}, \mathbb{Z}/\ell)=\Cohomology{\HomotopyFibre{D_q}}=\Cohomology{\HomotopyFibre{\Delta}}=\Cohomology{\Omega BG^\wedge}.
\]
Denote by $GL(V)$ be the group of automorphisms of $V$ and we also denote by $|GL(V)|$ the order of the finite group $GL(V)$. 
Let 
\begin{align*}
e_1 & =(\ell |GL(V)|)^{2\dim G}\quad \text{and}\\
e_2 &=|GL(V\otimes V)|.
\end{align*}

Before we proceed to lemmas, we set up some notations. Let us consider a commutative diagram.
\[
\begin{diagram}
\node{A} \arrow{e,t}{g} \arrow{s,l}{\alpha} \node{B} \arrow{s,r}{\beta} \\
\node{X} \arrow{e,t}{f} \node{Y.}
\end{diagram}
\]

We write $f:X^I \to Y^I$ for the map induced by $f:X\to Y$, so that 
\[
f(\lambda)(t)=f(\lambda(t)).
\]
We also write $g\times f$ for the map $A \times X^I \to B\times Y^I$ induced by $f$, $g$ and the restriction of $g\times f:\thinspace A \times X^I \to B \times Y^I$ to the mapping tracks $\MappingTrack{\alpha} \to \MappingTrack{\beta}$ and the homotopy fibres $\HomotopyFibre{\alpha} \to \HomotopyFibre{\beta}$.

Let $q'=q^e$ and $q$ is a power of $p$.
The inclusion of $\mathbb{F}_q$ into $\mathbb{F}_{q'}$ induces  maps
\[
i:\thinspace BG(\mathbb{F}_q) \to BG(\mathbb{F}_{q'})
\]
and
\[
i \times 1:\thinspace \HomotopyFibre{D_q} \longrightarrow \HomotopyFibre{D_{q'}}.
\]


As for $e_1$, we have the following lemma. A variant of this lemma is used in the proof of Theorem 1.4 in \cite{friedlanderMislin1984}. 

\begin{lemma} \label{Lemma:zero}
Suppose that $e$ is divisible by $e_1$. 
Then, the induced homomorphism
\[
(i\times 1)^*:\ReducedCohomology{\HomotopyFibre{D_{q'}}} \to \ReducedCohomology{\HomotopyFibre{D_q}}
\]
is zero.
\end{lemma}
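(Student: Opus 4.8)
The plan is to exploit the fact that, by Remark~\ref{Remark:strict1}, the Frobenius $\phi^q$ on $BG^\wedge$ is a genuine automorphism with $\phi^{q^e} = (\phi^q)^{\circ e}$ on the nose, so that $\HomotopyFibre{D_{q'}}$ for $q' = q^e$ carries an honest action of the cyclic structure coming from iterating $\phi^q$. Concretely, I would first set up the commutative ladder relating $\HomotopyFibre{D_q} \to \HomotopyFibre{D_{q'}}$ to the defining pullback squares of Theorem~\ref{Theorem:fm}(3): since $\HomotopyFibre{D_q} \simeq \Loop_{\phi^q}(BG^\wedge)$ and $\HomotopyFibre{D_{q'}} \simeq \Loop_{\phi^{q^e}}(BG^\wedge)$ via the fibre squares displayed in the introduction, the map $i \times 1$ is, up to the cohomology isomorphisms of Theorem~\ref{Theorem:fm}(3), the map on twisted loop spaces induced by $\lambda \mapsto (\lambda,\, \phi^q\lambda,\, \dots,\, \phi^{q(e-1)}\lambda)$-type concatenation; the upshot is that $\HomotopyFibre{D_q} \to \HomotopyFibre{D_{q'}}$ fits into a fibration whose fibre cohomology is controlled by $V = \Cohomology{\Omega BG^\wedge} = \Cohomology{G}$.

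The key computational input is the Leray–Serre (or Eilenberg–Moore) spectral sequence for the fibration $\pi_0 : \HomotopyFibre{D_{q'}} \to BG^\wedge$ with fibre $\Omega BG^\wedge$, and likewise for $q$. On $\ReducedCohomology{\HomotopyFibre{D_{q'}}}$ the relevant classes are detected on the fibre $\Omega BG^\wedge$, i.e.\ in $V$, and there are at most $\dim G$ tensor-degrees worth of such exterior generators to control (this is where the exponent $\dim G$ in $e_1 = (\ell |GL(V)|)^{2\dim G}$ enters). The action of iterating $\phi^q$ on the fibre is an automorphism of $V$, hence an element of the finite group $GL(V)$; raising to the power $|GL(V)|$ makes it trivial, and the extra factors of $\ell$ absorb the usual "unipotent part"/Bockstein or transgression discrepancies between the strict and homotopical actions. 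So after passing from $q$ to $q' = q^e$ with $e$ divisible by $e_1$, the composite $i \times 1$ becomes, on each associated graded piece relevant to positive degrees, a map that factors through something like a "difference" map $1 - (\phi^q)^{*}$ which has been arranged to annihilate everything: the transfer/Becker–Gottlieb-type argument (the "variant used in the proof of Theorem 1.4 of \cite{friedlanderMislin1984}") shows $(i \times 1)^*$ is multiplication by $e$ on a suitable Gysin-type class, and divisibility of $e$ by the group orders forces this to vanish mod $\ell$.

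The cleanest way to organize the last point is: write $\ReducedCohomology{\HomotopyFibre{D_{q'}}}$ as built from $V$ under the $\phi$-action, observe that a class pulled back along $i\times 1$ is hit by a norm/averaging element $1 + \phi^q{}^* + \dots + \phi^{q(e-1)}{}^*$ acting on $V$, and note that when $e$ is a multiple of $|GL(V)|$ this operator is $e/\mathrm{ord}(\phi^q{}^*)$ times the norm over the cyclic subgroup generated by $\phi^q{}^*$, hence is divisible by $\ell$ once $e$ also absorbs the factor $\ell$; for the remaining (non-semisimple, $\ell$-group) part of the automorphism one uses that a unipotent automorphism of $V$ has $\ell$-power order bounded in terms of $\dim V$, which is again swallowed by the exponent $2\dim G$ and the factor $\ell$ in $e_1$. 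I expect the main obstacle to be bookkeeping the interaction between the Leray–Serre filtration and the $\phi^q$-action — i.e.\ verifying that the action on the associated graded is genuinely by $GL(V)$ (and its iterates) with no higher filtration jumps that would need a larger exponent than $\dim G$ — and in reconciling the strict equality $\phi^{q^e} = (\phi^q)^{\circ e}$ of Remark~\ref{Remark:strict1} with the homotopy-commutative square of Theorem~\ref{Theorem:fm}(3) so that the "norm kills it" computation is actually valid on $\ReducedCohomology{\HomotopyFibre{D_{q'}}}$ and not merely on the $E_\infty$-page.
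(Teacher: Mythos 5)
Your proposal correctly identifies the heart of the matter: the map $i\times 1$ should, in a suitable sense, be given by a norm/averaging operator $1 + (\phi^q)^* + \cdots + (\phi^q)^{*(e-1)}$, and when $e$ is divisible by $\ell$ times the order of $(\phi^q)^*$ in $GL(V)$ that operator vanishes mod $\ell$. But the route you propose for making this rigorous is not the paper's, and the gap you yourself flag at the end --- reconciling the norm computation on the Leray--Serre $E_\infty$-page (or on fibre associated-graded) with the actual cohomology of $\HomotopyFibre{D_{q'}}$ --- is real and is not something your argument closes. The paper never runs into that issue because it does not use the Leray--Serre spectral sequence for this lemma at all.

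The paper's mechanism is the Lang isogeny. The isomorphism $V = H^{*}_{\mathrm{et}}(G_{\closure}) \cong \Cohomology{\HomotopyFibre{D_q}}$ is realized by the Lang map $(1/\phi^q)\colon G(\closure)/G(\mathbb{F}_q) \to G(\closure)$, $g \mapsto g\,(\phi^q(g))^{-1}$, and under this identification the obvious projection $\pi\colon G(\closure)/G(\mathbb{F}_q) \to G(\closure)/G(\mathbb{F}_{q'})$ (which is what $i\times 1$ becomes) corresponds to the \emph{genuine map of spaces} $\theta^{q'/q}(g) = g\cdot\phi^q(g)\cdots(\phi^q)^{e-1}(g)$ on $G(\closure)$. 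One then computes $\theta^{q'/q}_{*}$ on $\Homology{G}$ using the Hopf algebra structure: $\theta^{q'/q}_{*} = \mu_{*}\circ(1\times\phi^q_{*}\times\cdots\times\phi^{q^{e-1}}_{*})\circ\Delta_{*}$, which on a \emph{primitive} element $x$ reduces exactly to the norm sum $\sum_{t=0}^{e-1}(\phi^q)^{t}_{*}(x)$, and this vanishes when $\ell m \mid e$ (where $m$ is the order of $(\phi^q)^{*}$ in $GL(V)$). There is no filtration to lift across; the computation takes place directly in $\ReducedHomology{\HomotopyFibre{D_q}}$.

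The remaining issue is that the norm-sum formula only holds on primitives. The paper handles this with a degree induction driven by the coproduct: if $(i\times 1)_{*}(y)=0$ for all $\deg y < \deg x$, then $(i\times 1)_{*}(x)$ is forced to be primitive (the cross terms in $\Delta_{*}(i\times 1)_{*}(x)$ die), and then the next factor of $\ell m$ kills it. Thus each degree step costs a factor $(\ell m)^{2}$ (one for primitivity, one for killing), and since $\ReducedHomology{\HomotopyFibre{D_q}}$ is concentrated in degrees $\leq \dim G$, a total of $(\ell m)^{2\dim G}$ suffices; replacing $m$ by $|GL(V)|$ gives $e_1$. This is the content you gesture at with ``at most $\dim G$ tensor-degrees,'' but your account does not supply the primitivity mechanism, and it does not explain the factor of $2$ in the exponent. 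Two smaller inaccuracies: the norm is not ``multiplication by $e$ on a Gysin-type class'' --- the operator is a sum of Galois twists, and only on primitives does it collapse to a scalar multiple --- and the cited ``variant in Friedlander--Mislin'' is not a Becker--Gottlieb transfer argument (the map $i$ goes the ``wrong way'' for a transfer; what is actually used is the Lang map computation as above). So: right key identity, but a different and incomplete proposed proof; the missing ingredient is the Lang isogeny identification and the Hopf-algebra degree induction, which together let one avoid the spectral-sequence filtration problem entirely.
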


\begin{proof}
In general, 
we have 
\[
\Delta_{*}({(i \times 1)}_*(x))=1 \otimes {(i \times 1)}_*(x)+ \sum {(i \times 1)}_*( y' )\otimes {(i\times 1)}_{*} (y'' )+ {(i \times 1)}_*(x) \otimes 1,
\]
where $\deg y' <\deg x$ or $\deg y'' <\deg x$.
Hence, if ${(i \times 1)}_*(y)=0$ for $\deg y < \deg x $, then  we have 
\[
\Delta_{*}({(i \times 1)}_*(x))=1 \otimes {(i \times 1)}_*(x)+{(i \times 1)}_*(x) \otimes 1.
\]
So, if $(i\times 1)_{*}(y)=0$ for $\deg y<\deg x$, ${(i \times 1)}_*(x)$ is primitive.

The Frobenius map $\phi^q$ is an element of the Galois group 
$\mathrm{Gal}(\closure/\mathbb{F}_p)$, the induced homomorphism  
${\phi^q}^*:H_{\mathrm{et}}^{*}(G_{\closure}, \mathbb{Z}/\ell) \to H^{*}_{\mathrm{et}}(G_{\closure}, \mathbb{Z}/\ell)$
 is an automorphism in $GL(V)$.
Suppose that  $q'=q^e$ and $e$ is divisible by $\ell \cdot m$ where $m$ is the order of 
\[
{\phi^q}^*:V \to V
\]
as an element in $GL(V)$. Consider the induced homomorphism
\[
{(i \times 1)}_*:\thinspace \ReducedHomology{\HomotopyFibre{D_q}} \to \ReducedHomology{\HomotopyFibre{D_{q'}}}.
\]
The isomorphism between $H_{\mathrm{et}}^{*}(G_{\closure}, \mathbb{Z}/\ell)$ and $\Cohomology{\HomotopyFibre{D_q}}$ is given by 
the Lang map  $(1/\phi^q):  G({\closure})/G(\mathbb{F}_q) \to G({\closure})$ defined by $(1/\phi^q) (g)= g \cdot (\phi^q(g))^{-1}$.
Thus, the map ${(i \times 1)}_*$ corresponds to a homomorphism $\theta^{q'/q}:G(\closure) \to G(\closure)$
given by the diagram 
\[
\begin{diagram}
\node{G({\closure})/G(\mathbb{F}_q)} 
\arrow{e,t}{\pi} 
\arrow{s,l}{1/\phi^q} 
\node{G({\closure})/G(\mathbb{F}_{q'})} 
\arrow{s,r}{1/\phi^{q'}}\\
\node{G({\closure})} 
\arrow{e,t}{\theta^{q'/q}} \node{G({\closure}),}
\end{diagram}
\]
where $\pi$ is the obvious projection.
In other words, $\theta^{q'/q}$ is given by 
\[
\theta^{q'/q}
(g)= g \cdot \phi^q(g) \cdot \dots \cdot (\phi^{q})^{e-1}(g).
\]
Thus,  ${(i \times 1)}_*(x)$ is given by 
\[
{(i \times 1)}_*(x)=(\mu \circ (1 \times \phi^q \times \dots \times \phi^{q^{e-1}})\circ \Delta )_*(x)
\]
If $x$ is primitive, we have 
\begin{align*}
{(i \times 1)}_*(x)&=\sum_{t=0}^{e-1} (\phi^q)_{*}^t (x)\\
&=(e/m) \cdot \left( \sum_{t=0}^{m-1} (\phi^q)_{*}^t(x) \right) \\
&=0.
\end{align*}

Thus, if $e$ is divisible by $(\ell \cdot m)^2$ and  if ${(i\times 1)}_*(y)=0$ for $\deg y<j$, then ${(i \times 1)}_*(x)=0$ for $\deg x \leq j$.
Therefore, if $G$ is connected and if $e$ is divisible by $ (\ell \cdot m)^{2k}$, the induced homomorphism
\[
\ReducedCohomology{\HomotopyFibre{D_{q'}}} \to \ReducedCohomology{\HomotopyFibre{D_q}}
\]
is zero up to degree $k$.  Let $k=\dim G$. Since, by definition,  $H^{j}(\HomotopyFibre{D_q})=\{0\}$ for $j>k$, we have Lemma~\ref{Lemma:zero}.
\end{proof}


As for the integer $e_2$, we prove the following:

\begin{lemma}
\label{Lemma:identity}
Suppose that  $e$ is divisible by $e_2$.
Then, the induced homomorphism 
\[
(1 \times (1\times \phi^{q'}))^*:\Cohomology{\HomotopyFibre{\Delta \circ D_{q}}} \to \Cohomology{\HomotopyFibre{\Delta\circ D_{q}}}
\]
is the identity homomorphism.
\end{lemma}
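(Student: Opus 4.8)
The plan is to exhibit $(1\times(1\times\phi^{q'}))^{*}$ as the $e$-th power of a single automorphism of a $\mathbb{Z}/\ell$-vector space that is isomorphic to $V\otimes V$, and then to invoke Lagrange's theorem inside $GL(V\otimes V)$.

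First I would observe that $\phi^{q}\circ D_{q}=D_{q}$ \emph{on the nose}. Indeed $D_{q}=D\circ i_{q}$; the group homomorphism $G(\mathbb{F}_q)\to G(\closure)$ inducing $i_{q}$ has image in the $\phi^{q}$-fixed subgroup $G(\closure)^{\phi^{q}}=G(\mathbb{F}_q)$, so $\phi^{q}\circ i_{q}=i_{q}$, while $\phi^{q}\circ D=D\circ\phi^{q}$ holds strictly by Remark~\ref{Remark:strict2}. Hence $(1\times\phi^{q})\circ\Delta\circ D_{q}=\Delta\circ D_{q}$ strictly, the square with the identity on $BG(\mathbb{F}_q)$, with $\Delta\circ D_{q}$ on the two sides and with $1\times\phi^{q}$ on the bottom commutes on the nose, and it induces a self-map $\sigma:=1\times(1\times\phi^{q})$ of $\HomotopyFibre{\Delta\circ D_{q}}$; explicitly $\sigma(a,\mu_{1},\mu_{2})=(a,\mu_{1},\phi^{q}\circ\mu_{2})$, where $\mu_{1},\mu_{2}$ are the two path-coordinates of a point of $\HomotopyFibre{\Delta\circ D_{q}}$. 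Since $\phi^{q}$ is an automorphism of $BG^\wedge$ and $(\phi^{q})^{e}=\phi^{q^{e}}=\phi^{q'}$ strictly (Remark~\ref{Remark:strict1}), the $e$-fold composite $\sigma^{e}$ is \emph{equal} to the self-map $1\times(1\times\phi^{q'})$; therefore $(1\times(1\times\phi^{q'}))^{*}=(\sigma^{*})^{e}$ on $W:=\Cohomology{\HomotopyFibre{\Delta\circ D_{q}}}$, with $\sigma^{*}\in GL(W)$.

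Next I would identify $W$ with $V\otimes V$ as a graded $\mathbb{Z}/\ell$-module. Unwinding the definitions, a point of $\HomotopyFibre{\Delta\circ D_{q}}$ is a triple $(a,\mu_{1},\mu_{2})$ with $\mu_{1},\mu_{2}$ paths in $BG^\wedge$ from the basepoint to $D_{q}(a)$; recording $(a,\mu_{1})$ and leaving $\mu_{2}$ free identifies $\HomotopyFibre{\Delta\circ D_{q}}$ with the homotopy fibre of $D_{q}\circ\pi$, where $\pi:\HomotopyFibre{D_{q}}\to BG(\mathbb{F}_q)$ is the projection. But $D_{q}\circ\pi$, being the composite of a map with its own homotopy fibre, is canonically null-homotopic, so its homotopy fibre splits: $\HomotopyFibre{\Delta\circ D_{q}}\simeq\HomotopyFibre{D_{q}}\times\Omega BG^\wedge$. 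Using $\Cohomology{\HomotopyFibre{D_{q}}}=V=\Cohomology{\Omega BG^\wedge}$ from Section~\ref{sec2} and the K\"unneth theorem over the field $\mathbb{Z}/\ell$, this gives $W\cong V\otimes V$; in particular $W$ is finite-dimensional. (For the argument one only needs the inequality $\dim_{\mathbb{Z}/\ell}W\le(\dim_{\mathbb{Z}/\ell}V)^{2}$, because then $GL(W)$ embeds as a block subgroup of $GL(V\otimes V)$.)

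Combining the two steps, $\sigma^{*}$ lies in the finite group $GL(W)$, whose order divides $|GL(V\otimes V)|=e_{2}$ by Lagrange's theorem; since $e$ is divisible by $e_{2}$, the power $(\sigma^{*})^{e}=(1\times(1\times\phi^{q'}))^{*}$ is the identity, which is Lemma~\ref{Lemma:identity}. The step I expect to demand the most care is the product decomposition $\HomotopyFibre{\Delta\circ D_{q}}\simeq\HomotopyFibre{D_{q}}\times\Omega BG^\wedge$ — equivalently, pinning down the size of $\Cohomology{\HomotopyFibre{\Delta\circ D_{q}}}$ — since everything else is bookkeeping with the strictly commuting Frobenius furnished by Remarks~\ref{Remark:strict1} and \ref{Remark:strict2}.
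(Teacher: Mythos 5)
Your proposal is correct and follows the same high-level strategy as the paper: both reduce Lemma~\ref{Lemma:identity} to the statement that $(1\times(1\times\phi^q))^*$ is an automorphism of a $\mathbb{Z}/\ell$-vector space $W=\Cohomology{\HomotopyFibre{\Delta\circ D_q}}$ with $\dim_{\mathbb{Z}/\ell}W\le\dim_{\mathbb{Z}/\ell}(V\otimes V)$, embed $GL(W)$ in $GL(V\otimes V)$, apply Lagrange's theorem to see that its order divides $e_2$, and then use the strict identities of Remarks~\ref{Remark:strict1} and \ref{Remark:strict2} to write $(1\times(1\times\phi^{q'}))^*$ as the $e$-th power of that automorphism. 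Where you diverge is in how the bound on $\dim W$ is obtained. The paper runs the Leray--Serre spectral sequence of the fibre sequence $\Omega BG^\wedge\to\HomotopyFibre{\Delta\circ D_q}\to\HomotopyFibre{D_q}$ and spends most of the proof checking (via $(\Omega\Delta)_*$ being split injective and $\pi_1(BG^\wedge)=0$) that the local coefficient system is trivial, so that $E_2=V\otimes V$ bounds $\dim W$ from above. You instead observe that $\HomotopyFibre{\Delta\circ D_q}$ is the homotopy fibre of $D_q\circ\pi:\HomotopyFibre{D_q}\to BG^\wedge$, which is canonically null-homotopic (composition of a fibration with the inclusion of its homotopy fibre), and invoke the standard splitting of the homotopy fibre of a null-homotopic map to get $\HomotopyFibre{\Delta\circ D_q}\simeq\HomotopyFibre{D_q}\times\Omega BG^\wedge$, hence $W\cong V\otimes V$ outright by K\"unneth. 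Your version bypasses the local-coefficient discussion entirely and yields an equality rather than an inequality; the paper's version is more spectral-sequence-theoretic but uses only tools already deployed elsewhere in the paper. Both are sound.
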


\begin{proof}

Let $p_i:BG^\wedge\times BG^\wedge \to BG^\wedge$ be the projections onto the first and second factors for $i=1,2$.
Consider the diagram 
\[
\begin{diagram}
\node{\Omega BG^\wedge} \arrow{e,t}{=} \arrow{s} \node{\Omega BG^\wedge} \arrow{e,t}{\Omega \Delta} \arrow{s} \node{\Omega(BG^\wedge \times BG^\wedge)} \arrow{s}\\
\node{\HomotopyFibre{\Delta\circ D_q}} \arrow{e,t}{1\times {p_1}} \arrow{s,l}{1\times {p_2}}
\node{\HomotopyFibre{D_q}} \arrow{e,t}{D_q\times \Delta} \arrow{s,l}{p_1} \node{\HomotopyFibre{\Delta}}\arrow{s,r}{p_1}\\
\node{\HomotopyFibre{D_q}} \arrow{e,t}{p_1} \node{BG(\mathbb{F}_q)} \arrow{e,t}{D_q} \node{BG^\wedge}
\end{diagram}
\]
The induced homomorphism 
\[
(\Omega \Delta)_*:\thinspace \Homology{\Omega BG^\wedge} {\longrightarrow} 
\Homology{\Omega (BG^\wedge \times BG^\wedge)}
\]
is a monomorphism and $\pi_1(BG^\wedge)=\{0\}$. So, $\pi_1(BG(\mathbb{F}_q))$ acts trivially on 
$\Homology{\Omega BG^\wedge}$. Therefore, $\pi_1(\HomotopyFibre{D_q})$ also acts trivially on $\Homology{\Omega BG^{\wedge}}$. Thus, the local coefficient of the induced fibre sequence
\[
\Omega BG^\wedge \to \HomotopyFibre{\Delta\circ D_q} \stackrel{1 \times {p_2}}{\longrightarrow}  \HomotopyFibre{D_q}
\]
is trivial.
Hence, the $E_2$-term of the Leray-Serre spectral sequence for the cohomology of $\HomotopyFibre{\Delta\circ D_q}$ is 
given by 
\[
V \otimes V = \Cohomology{\HomotopyFibre{D_q}} \otimes \Cohomology{\Omega BG^\wedge}.
\]
Therefore, we have that 
\[
\dim_{\mathbb{Z}/\ell} \Cohomology{\HomotopyFibre{\Delta\circ D_q}} \leq \dim_{\mathbb{Z}/\ell} (V \otimes V).
\]
Since the  map $\phi^q:BG^\wedge \to BG^\wedge$ is an automorphism, 
the induced map  $$1\times (1\times \phi^q):\HomotopyFibre{\Delta\circ D_q} \to \HomotopyFibre{\Delta \circ D_q}$$ is also an automorphism. Since
\[
\dim_{\mathbb{Z}/\ell} \Cohomology{\HomotopyFibre{\Delta\circ D_q}} \leq \dim_{\mathbb{Z}/\ell} (V \otimes V)
,
\]
 $e_2$ is divisible by the order of 
 $$(1 \times (1\times \phi^q))^*:\Cohomology{\HomotopyFibre{\Delta\circ D_q}} \to 
 \Cohomology{\HomotopyFibre{\Delta \circ D_q}}. $$ Hence, if $e$ is divisible by $e_2$, we have 
\[
(1 \times (1\times \phi^{q'}))^*=((1\times (1\times \phi^q))^*)^e=1.\qedhere
\]
\end{proof}



\section{Proof of Theorem~\ref{Theorem:main}}\label{sec3}

Let $X$ be a space and let $f:X \to X$ be a self-map of $X$ with a non-empty fixed point set. 
Let $\alpha:A\to X$ be a map such that  
\[
f\circ \alpha = \alpha.
\]
We choose a base-point $*$ in $A$, $X$, so that both $f$, $\alpha$ are base-point preserving.

 Firstly, we define a map 
 \[
 \varphi:\Loop_f X \times_{X} \Loop_f X \to \Loop X, 
 \]
 where 
 \[
 \Loop_f X \times_{X}\Loop_f X=\{ (\lambda_1, \lambda_2) \in \Loop_f X \times \Loop_f X\;|\; \lambda_1(0)=\lambda_2(0) \}.
 \]
 The map $\varphi$ is defined by
\[
 \varphi(\lambda_1, \lambda_2)(t)=
 \begin{cases}
\lambda_{1}(2t) & \text{for  $\displaystyle 0\leq t \leq \frac{1}{2}$,} \\
\lambda_{2}(2-2t) & \text{for  $\displaystyle \frac{1}{2} \leq t \leq 1$.}
\end{cases}
 \]
 Since $\lambda_{1}(1)=f(\lambda_1(0))$, $\lambda_2(1)=f(\lambda_2(0))$ and $\lambda_{1}(0)=\lambda_2(0)$, this map is well-defined.
 
 Next, we define a map from $P_{\alpha}$ to $\Loop_f X$, say $\psi:P_{\alpha} \to \Loop_f X$, by
 \[
 \psi((a, \lambda))(t)=
 \begin{cases}
\lambda(2t) & \text{for  $\displaystyle 0\leq t \leq \frac{1}{2}$,} \\
f(\lambda(2-2t)) & \text{for  $\displaystyle \frac{1}{2} \leq t \leq 1$.}
 \end{cases}
 \]
Since $\lambda(1)=f(\lambda(1))$, this map is also well-defined.

Now, we consider the following diagram:
\[
\begin{diagram}
\node{\Loop_f X}   \node{\Loop_f X \times_{X} \Loop_f X} \arrow{w,t}{p_1} \arrow{e,t}{\varphi} \node{\Loop X} \\
\node[2]{\Loop_f X \times_X \MappingTrack{\alpha}.} \arrow{n,r}{1\times \psi}
\end{diagram}
\]
where
\[
\Loop_f X \times_{X} P_{\alpha}=\{ (\lambda_1, (a, \lambda_2) )\in \Loop_f X \times P_{\alpha}\;|\; \lambda_1(0)=\lambda_2(0),\;  \alpha(a)=\lambda_2(1) \},
\]
$p_1$ is the projection onto the first factor and $\pi_0\circ p_1=\pi_0$, $\pi_0\circ \varphi=\pi_0$, $\pi_{0}\circ (1\times \psi)=\pi_0$.
Let us denote by $E_r(Y)$ the Leray-Serre spectral sequence associated with a  fibration $\xi:Y \to X$. Then we have the following diagram of spectral sequences:
\[
\begin{diagram}
\node{E_r(\Loop_f X)} \arrow{e,t}{p_1^*}  \node{E_r(\Loop_f X \times_{X} \Loop_f X)} \arrow{s,r}{1\times \psi^*} \node{E_r(\Loop X)} \arrow{w,t}{\varphi^*}\\
\node[2]{E_r(\Loop_f X \times_X P_{\alpha}).}
\end{diagram}
\]
By abuse of notation, we denote by $\psi:\thinspace \HomotopyFibre{\alpha} \to \Omega X$ the restriction of $\psi:\thinspace P_{\alpha} \to \Loop_f X$ to fibres.
Let us consider a sufficient condition for the induced homomorphism 
\[
\psi^*:\ReducedCohomology{\Omega X}  \to \ReducedCohomology{\HomotopyFibre{\alpha}}
\]
to be zero.
Again, by abuse of notation, we denote by 
$\varphi:\Omega X \times \Omega X\to \Omega X$ 
the restriction of $\varphi:\Loop_f X \times_{X} \Loop_f X\to \Loop X$ to fibres.


\begin{lemma}\label{Lemma:same}
If the induced homomorphism
\[
(1 \times (1\times f))^*:\Cohomology{\HomotopyFibre{\Delta\circ \alpha}} \to \Cohomology{\HomotopyFibre{\Delta\circ \alpha}} 
\]
is the identity homomorphism, then the induced homomorphism
\[
\psi^*:\ReducedCohomology{\Omega X} \to \ReducedCohomology{\HomotopyFibre{\alpha}}
\]
is zero.
\end{lemma}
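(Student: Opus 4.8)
The plan is to reduce the vanishing of $\psi^*$ on reduced cohomology of $\Omega X$ to a statement about the comultiplication on the homology of $\Omega X$ induced by the fibre-restricted maps, and then to exploit the hypothesis that $(1\times(1\times f))^*$ is the identity on $H^*(\HomotopyFibre{\Delta\circ\alpha})$. First I would identify the fibre of $\psi$ concretely. Over the chosen base-point, $\psi:\HomotopyFibre{\alpha}\to\Omega X$ is (up to the standard homotopies) the composite $\HomotopyFibre{\alpha}\to\Omega X\xrightarrow{(1,f)}\Omega X\times\Omega X\xrightarrow{\text{concat}}\Omega X$, i.e. $\psi$ sends a loop-type datum $\lambda$ to the loop $\lambda\cdot(f\circ\bar\lambda)$, where $\bar\lambda$ is the reverse path. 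Dually, on homology, $\psi_*(x)$ is obtained by applying the coproduct $\Delta_*$ to $x\in\Homology{\HomotopyFibre{\alpha}}$, mapping the first tensor factor into $\Homology{\Omega X}$ by the natural map and the second by $f_*\circ\chi$ (antipode), and then multiplying with the loop product. So $\psi_*(x)=\sum \iota_*(x')\cdot f_*\chi(x'')$ where $\Delta_*(x)=\sum x'\otimes x''$ and $\iota:\HomotopyFibre{\alpha}\to\Omega X$ is the natural map.

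The key point is then to recognise that the datum $\sum x'\otimes\chi(x'')$, i.e. the map classifying $(\iota,\iota):\HomotopyFibre{\alpha}\to\Omega X\times\Omega X$ composed with $1\times\chi$, is exactly what the space $\HomotopyFibre{\Delta\circ\alpha}$ computes: the fibre of $\Delta\circ\alpha:A\to X\times X$ is $\HomotopyFibre{\alpha}$ fibred over $\Omega X$ in two ways via $1\times p_1$ and $1\times p_2$ as in the diagram used in the proof of Lemma~\ref{Lemma:identity}, and the map $1\times(1\times f)$ is precisely the endomorphism that replaces the second $\Omega X$-coordinate by its $f$-image. So applying $(1\times(1\times f))^*=\mathrm{id}$ forces, after dualising, that in $\Homology{\HomotopyFibre{\Delta\circ\alpha}}$ — which by the collapse/dimension argument of Lemma~\ref{Lemma:identity} is $V\otimes V$ — the two $\Omega X$-module structures, one twisted by $f_*$ and one not, agree. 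Tracing this through the product map $\Omega X\times\Omega X\to\Omega X$, the contributions $\iota_*(x')\cdot f_*\chi(x'')$ and $\iota_*(x')\cdot\chi(x'')$ coincide; but the latter is the image of $x$ under $\HomotopyFibre{\alpha}\to\Omega X\times\Omega X\to\Omega X$ given by $\lambda\mapsto\lambda\cdot\bar\lambda\simeq*$, which is null-homotopic. Hence $\psi_*$ is zero on positive-degree homology, so $\psi^*$ is zero on $\ReducedCohomology{\Omega X}$.

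Organising the steps: (i) make the fibre identification of $\psi$ precise and write $\psi=(\text{mult})\circ(1\times f)\circ\Delta\circ\iota$ at the level of fibres, up to homotopy; (ii) pass to mod $\ell$ homology, where $\psi_*(x)=\mu_*\,(\iota_*\otimes (f_*\chi_*))\,\Delta_*(x)$; (iii) set up the commutative ladder of fibrations over $X\times X$ relating $\HomotopyFibre{\alpha}$, $\HomotopyFibre{\Delta\circ\alpha}$, $\Omega X$ and $\Omega X\times\Omega X$, and observe that $1\times(1\times f)$ on $\HomotopyFibre{\Delta\circ\alpha}$ corresponds to twisting the second factor by $f$; (iv) invoke the hypothesis $(1\times(1\times f))^*=\mathrm{id}$, dualise using the dimension count already established in Lemma~\ref{Lemma:identity} so that the map is an isomorphism and the dual statement is genuine equality in homology, and conclude that the $f$-twisted and untwisted pushforwards into $\Homology{\Omega X\times\Omega X}$ agree on the image of $\HomotopyFibre{\alpha}$; (v) compose with $\mu_*$ and note the untwisted composite factors through the null loop $\lambda\mapsto\lambda\bar\lambda$, giving $\psi_*=0$ in positive degrees.

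The main obstacle I anticipate is step (iv): making rigorous the passage from "$(1\times(1\times f))^*$ is the identity on cohomology" to an actual equality of two homology operations $\Homology{\HomotopyFibre{\alpha}}\to\Homology{\Omega X\times\Omega X}$. The subtlety is that $1\times(1\times f)$ is an endomorphism of $\HomotopyFibre{\Delta\circ\alpha}$, and one must check that its triviality on $H^*$ pins down the relation between the two composites $\HomotopyFibre{\alpha}\rightrightarrows\HomotopyFibre{\Delta\circ\alpha}\to\Omega X\times\Omega X$ and not merely an automorphism of $\HomotopyFibre{\Delta\circ\alpha}$ up to something unseen by cohomology; this is where the Leray–Serre $E_2=V\otimes V$ computation from Lemma~\ref{Lemma:identity} is essential, since it both guarantees $1\times(1\times f)$ is an isomorphism (so the cohomology statement transfers faithfully to homology) and controls $\Homology{\HomotopyFibre{\Delta\circ\alpha}}$ tightly enough to extract the module-structure identity. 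Everything else — the fibre identifications, the antipode bookkeeping, the final null-homotopy — is routine.
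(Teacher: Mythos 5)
Your closing null-homotopy (a path concatenated with its own reverse contracts) is exactly the paper's endgame, but the route you take there has genuine gaps, and both are dissolved by a simpler observation you missed. The paper's proof rests on a single on-the-nose factorization of the fibre map $\psi$:
\[
\HomotopyFibre{\alpha} \stackrel{1\times\Delta}{\longrightarrow}\HomotopyFibre{\Delta\circ\alpha}\stackrel{1\times(1\times f)}{\longrightarrow}\HomotopyFibre{\Delta\circ\alpha}\stackrel{\varphi}{\longrightarrow}\Omega X.
\]
With this, the hypothesis applies immediately in cohomology: $\psi^*=(1\times\Delta)^*\circ(1\times(1\times f))^*\circ\varphi^*=(1\times\Delta)^*\circ\varphi^*$, and $\varphi\circ(1\times\Delta)$ is null-homotopic by the explicit contraction $h_s$. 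No homology, no Hopf-algebra bookkeeping, no antipode.

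Your proposal has two concrete problems. First, the ``natural map'' $\iota:\HomotopyFibre{\alpha}\to\Omega X$ in step (i) does not exist: a point of $\HomotopyFibre{\alpha}$ is a \emph{path} from $*$ to $\alpha(a)$, not a loop, and there is no canonical loop to assign to it other than $\psi$ itself (circular) or the trivial one. The correct intermediate space is $\HomotopyFibre{\Delta\circ\alpha}$, not a fictitious $\Omega X$; in particular your claimed factorization $\psi=\mu\circ(1\times f)\circ\Delta\circ\iota$ does not parse as written. Second, the obstacle you yourself flag at step (iv) is real: the hypothesis is about an endomorphism of $\Cohomology{\HomotopyFibre{\Delta\circ\alpha}}$, and it does not by itself give an equality of two homology operations $\Homology{\HomotopyFibre{\alpha}}\to\Homology{\Omega X\times\Omega X}$, nor is it clear how the $E_2$ dimension bound from Lemma~\ref{Lemma:identity} would supply that. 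The way out is not to patch this translation but to avoid it entirely: since $\psi$ literally factors through the arrow $1\times(1\times f)$, the hypothesis simply deletes that arrow from the composite in cohomology, and you are done.
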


\begin{proof}
The map $\psi:\HomotopyFibre{\alpha} \to \Omega X$ factors through
\[
\HomotopyFibre{\alpha} \stackrel{1\times \Delta}{\longrightarrow} \HomotopyFibre{\Delta\circ \alpha} \stackrel{1\times (1\times f)}{\longrightarrow} \HomotopyFibre{\Delta\circ \alpha} \stackrel{\varphi}{\longrightarrow} \Omega X.
\]
It is clear that the composition $\varphi\circ \Delta$ is null homotopic since an obvious null homotopy $h_s$ is given by
\[
h_s((a,\lambda))(t)=
\begin{cases}
\lambda(2st)& \text{for  $\displaystyle 0\leq t \leq \frac{1}{2}$,} \ \\
\lambda(2s-2st)& \text{for  $\displaystyle \frac{1}{2} \leq t \leq 1$.}
\end{cases}
\]
Therefore, we have 
\begin{align*}
(1\times \Delta)^*( (1\times (1\times f))^* (\varphi^*(x)))&=(1\times  \Delta)^*(\varphi^*(x)) \\
&=0.
\end{align*}
for $x \in \ReducedCohomology{\Omega X}$.
\end{proof}


We also need the following lemmas in the proof of Theorem~\ref{Theorem:main}.

\begin{lemma} \label{Lemma:mono}
Suppose that $X$ is simply connected, that  $H^i(\HomotopyFibre{\alpha})=0$ for $i>k$ and 
that there exists a sequence of maps  
\[
\begin{diagram}
\node{A_0} \arrow{e,t}{i_0} \node{A_1} \arrow{e,t}{i_1} \node{A_2} \arrow{e} \node{\dots} \arrow{e} \node{A_{k}} 
\arrow{e,t}{i_k} \node{A} \arrow{e,t}{\alpha}
\node{X}
\end{diagram}
\]
such that the induced homomorphism $
\ReducedCohomology{\HomotopyFibre{\alpha_{j}}} \to \ReducedCohomology{\HomotopyFibre{\alpha_{j-1}}}
$
is zero for $j=1, 2, \dots, k$.
Then the projection on the first factor $p_1:Y \times_{X} \MappingTrack{\alpha} \to Y$
 induces a monomorphism $p_1^{*}:\thinspace E_r(Y) \to E_r(Y \times_{X} \MappingTrack{\alpha})$ of Leray-Serre spectral sequences
 for arbitrary fibration $\xi: Y \to X$.
\end{lemma}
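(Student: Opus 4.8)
The plan is to analyze the Leray–Serre spectral sequence of the pullback fibration. Since $\MappingTrack{\alpha} \to X$ is the mapping track of $\alpha$, its fibre over the basepoint is $\HomotopyFibre{\alpha}$, and therefore for any fibration $\xi \colon Y \to X$ the pullback $Y \times_X \MappingTrack{\alpha} \to Y$ is a fibration with fibre $\HomotopyFibre{\alpha}$. Dually, $Y \times_X \MappingTrack{\alpha} \to X$ is a fibration whose fibre is $F \times \HomotopyFibre{\alpha}$, where $F$ is the fibre of $\xi$. The map $p_1 \colon Y \times_X \MappingTrack{\alpha} \to Y$ is a map of fibrations over $X$ which on fibres is the projection $F \times \HomotopyFibre{\alpha} \to F$. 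Because $X$ is simply connected, the local systems are trivial, so both $E_2$-terms are honest tensor products: $E_2(Y) = \Cohomology{X} \otimes \Cohomology{F}$ and $E_2(Y \times_X \MappingTrack{\alpha}) = \Cohomology{X} \otimes \Cohomology{F} \otimes \Cohomology{\HomotopyFibre{\alpha}}$, and $p_1^*$ on $E_2$ is $\mathrm{id} \otimes \mathrm{id} \otimes (\text{unit into } H^0)$ — in particular $p_1^*$ is injective on $E_2$.

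The issue is that injectivity on $E_2$ does not automatically propagate to later pages: an element of $E_2(Y)$ could survive while its image in $E_2(Y\times_X\MappingTrack{\alpha})$ is hit by a differential. This is exactly where the hypothesis on the sequence $A_0 \to A_1 \to \dots \to A_k \to A$ enters. Consider the composite $\HomotopyFibre{\alpha_0} \to \HomotopyFibre{\alpha_1} \to \dots \to \HomotopyFibre{\alpha_k} \to \HomotopyFibre{\alpha}$; the hypothesis says each successive map is zero on reduced cohomology. I would argue that the fibre inclusion $\HomotopyFibre{\alpha} \hookrightarrow Y \times_X \MappingTrack{\alpha}$ is surjective on mod $\ell$ cohomology — equivalently, that every class in $\ReducedCohomology{\HomotopyFibre{\alpha}}$ is a permanent cycle in the spectral sequence of $\HomotopyFibre{\alpha} \to Y\times_X \MappingTrack{\alpha} \to Y$. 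The point of the tower of length $k = \dim \HomotopyFibre{\alpha}$ (more precisely, one step beyond the top nonzero degree) is a downward induction on degree: if a differential $d_r$ hits a class coming from the fibre, one pulls the situation back along $i_k, i_{k-1}, \dots$, and the vanishing of each $\ReducedCohomology{\HomotopyFibre{\alpha_j}} \to \ReducedCohomology{\HomotopyFibre{\alpha_{j-1}}}$ forces the relevant differentials to vanish on the pullback, degree by degree, until after $k$ steps there is nothing left since $H^{>k}(\HomotopyFibre{\alpha}) = 0$. The main obstacle — and the heart of the argument — is making this induction precise: one must show that the transgression-type differentials on fibre classes in $E_r(Y \times_X \MappingTrack{\alpha})$ are controlled by the corresponding differentials in $E_r(A_j \times_X \MappingTrack{\alpha_{j-1}})$ or similar auxiliary pullbacks, using naturality of the spectral sequence with respect to the maps $i_j$.

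Once the fibre inclusion $\HomotopyFibre{\alpha} \hookrightarrow Y \times_X \MappingTrack{\alpha}$ is known to be cohomology-surjective, the spectral sequence $E_r(Y \times_X \MappingTrack{\alpha})$ collapses at $E_2$ along the fibre direction in the sense that no nonzero differential can originate from or land in the subspace $\Cohomology{X} \otimes \Cohomology{F} \otimes \ReducedCohomology{\HomotopyFibre{\alpha}}$; more carefully, one shows the differentials on $E_r(Y\times_X\MappingTrack{\alpha})$ are, via $p_1^*$, exactly the pullbacks of the differentials on $E_r(Y)$, so that $p_1^* \colon E_r(Y) \to E_r(Y \times_X \MappingTrack{\alpha})$ is a split injection compatible with $d_r$ on every page. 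Passing to the limit gives the monomorphism claimed. I would write the bulk of Section~\ref{sec4} as the verification that fibre classes in $\HomotopyFibre{\alpha}$ transgress trivially, reducing each potential differential inductively to the vanishing hypothesis via the naturality square for $i_j \colon A_{j-1} \to A_j$.
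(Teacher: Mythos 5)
Your outline correctly isolates the two driving ideas of the paper's proof: (i) $E_2$-injectivity of $p_1^*$ does not automatically persist to later pages, and (ii) the tower $A_0\to\cdots\to A_k\to A$ with the zero maps on $\ReducedCohomology{\HomotopyFibre{\alpha_j}}$ forces a strictly increasing sequence of page numbers, which contradicts the vanishing $H^{i}(\HomotopyFibre{\alpha})=0$ for $i>k$. That page-number contradiction is indeed the heart of the paper's Proposition~\ref{Proposition:monoB}.

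However, there is a genuine gap in the route you propose, and it concerns where that contradiction argument is applied. You aim to show that the fibre restriction $\Cohomology{Y\times_X\MappingTrack{\alpha}}\to\Cohomology{\HomotopyFibre{\alpha}}$ is surjective, i.e.\ that the $s=0$ column in the Leray--Serre spectral sequence of $\HomotopyFibre{\alpha}\to Y\times_X\MappingTrack{\alpha}\to Y$ consists of permanent cycles, and then to conclude that $p_1^*:E_r(Y)\to E_r(Y\times_X\MappingTrack{\alpha})$ is a split injection of spectral sequences over $X$ on every page. These are two different spectral sequences over two different bases, and you give no bridge between them. The statement you want about $E_r(Y)\to E_r(Y\times_X\MappingTrack{\alpha})$ concerns the fibrations over $X$; your permanent-cycle claim concerns the fibration over $Y$; and the asserted $d_r$-compatible splitting at the $E_2$ level is not established and does not obviously propagate, because a retraction commuting with $d_2$ need not commute with later differentials.

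The step you are missing is precisely the paper's Lemma~\ref{Lemma:monoA}: reduce injectivity of the map of Leray--Serre spectral sequences over $X$ to injectivity of the induced maps on the relative cohomology groups $M_{m,n}(Y)=\Cohomology{F_mY,F_nY}$ of the skeletal filtration pairs. Once that reduction is made, the page-number contradiction is run not in the fibration over $Y$ but in the \emph{relative} Leray--Serre spectral sequence of $\HomotopyFibre{\alpha}\to(Y'\times_X\MappingTrack{\alpha},Y''\times_X\MappingTrack{\alpha})\to(Y',Y'')$ for each filtration pair $(Y',Y'')=(F_mY,F_nY)$, and the target of the argument is the $t=0$ row (edge homomorphism injective), which is the dual statement to the one you go after. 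Your downward induction would in fact work for the $t=0$ row in the relative setting, so the core idea transfers; but as written, the proposal proves a statement (surjectivity of the fibre restriction) that is not directly the one needed, and the logical passage to the spectral-sequence monomorphism over $X$ is absent.
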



We need the following lemma to compare the spectral sequences.

\begin{lemma} \label{Lemma:ss}
Let 
\[
E_r' \stackrel{\rho_r'}{\longrightarrow} E_r \stackrel{\rho''_r}{\longleftarrow} E_r''
\]
be homomorphisms of spectral sequences. 
Suppose that 
\begin{itemize}
\item[{\rm (1)}] $\mathrm{Im}\, \rho_2'=\mathrm{Im}\, \rho_2''$,
\item[{\rm (2)}] $\rho_r'$ is a monomorphism for $r\geq 2$.
\end{itemize}
Then, there exists a unique homomorphism of spectral sequences $$\{ \tau_{r}:E_r'' \to E_r' \;|\; r \geq 2 \}$$
such that $\rho'_r \circ \tau_r=\rho_r''$ for $r\geq 2$. In particular, if $\rho''_2$ is also a monomorphism, then $\rho''_r$ is a monomorphism and $\tau_r$ is an isomorphism for $r\geq 2$.
\end{lemma}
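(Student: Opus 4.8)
The plan is to build $\tau_r$ by hand, exploiting that $\rho'_r$ is injective: whenever an element of $E_r$ lies in $\mathrm{Im}\,\rho'_r$, it has a unique preimage. First I would establish by induction on $r\geq 2$ two assertions simultaneously: (a) $\mathrm{Im}\,\rho'_r=\mathrm{Im}\,\rho''_r$ as submodules of $E_r$, and (b) $\rho'_r$ is a monomorphism. Assertion (b) is part of the hypothesis, so the real content is propagating (a) from $r$ to $r+1$. Granting (a) at level $r$, define $\tau_r:E_r''\to E_r'$ on an element $z\in E_r''$ by letting $\tau_r(z)$ be the unique element of $E_r'$ with $\rho'_r(\tau_r(z))=\rho''_r(z)$; such an element exists because $\rho''_r(z)\in\mathrm{Im}\,\rho''_r=\mathrm{Im}\,\rho'_r$, and it is unique because $\rho'_r$ is injective. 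Uniqueness of the preimage immediately forces $\tau_r$ to be additive, to commute with the (multi)grading, and — since all three maps are ring maps when the spectral sequences are multiplicative, or module maps in general — to be a homomorphism of the relevant algebraic type. This same uniqueness is what makes the whole collection $\{\tau_r\}$ unique once it exists.

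Next I would check that $\tau_r$ is a chain map, i.e. $\tau_r d''_r = d'_r \tau_r$. Apply $\rho'_{r}$ to both sides: $\rho'_r(\tau_r d''_r z) = \rho''_r(d''_r z) = d_r\rho''_r(z) = d_r\rho'_r(\tau_r z) = \rho'_r(d'_r\tau_r z)$, where the outer equalities are the defining property of $\tau_r$ and the middle ones are that $\rho'_r,\rho''_r$ commute with differentials. Since $\rho'_r$ is injective, $\tau_r d''_r z = d'_r\tau_r z$. Hence $\tau_r$ passes to homology and induces a map $\bar\tau_{r+1}:E''_{r+1}\to E'_{r+1}$; one checks directly, using that $\rho'_{r+1},\rho''_{r+1}$ are the maps induced on homology, that $\rho'_{r+1}\circ\bar\tau_{r+1}=\rho''_{r+1}$, so $\bar\tau_{r+1}$ is forced to coincide with the $\tau_{r+1}$ produced by the construction at level $r+1$ — provided we know (a) holds at level $r+1$. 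To close the induction I must verify $\mathrm{Im}\,\rho'_{r+1}=\mathrm{Im}\,\rho''_{r+1}$: the inclusion $\mathrm{Im}\,\rho''_{r+1}\subseteq\mathrm{Im}\,\rho'_{r+1}$ follows from $\rho''_{r+1}=\rho'_{r+1}\bar\tau_{r+1}$, and since by the same argument $\rho'_r$ factors through $\rho''_r$ up to a chain map going the other way — here is where I must be slightly careful — one gets the reverse inclusion. The cleanest way: from $\mathrm{Im}\,\rho'_r=\mathrm{Im}\,\rho''_r$ at level $r$ it follows that on $E_r$ the submodule $\mathrm{Im}\,\rho'_r$ is a subcomplex under $d_r$, and passing to $d_r$-homology commutes with taking images because of injectivity of $\rho'_r$; so $\mathrm{Im}\,\rho'_{r+1}$ is the $d_r$-homology of $\mathrm{Im}\,\rho'_r = \mathrm{Im}\,\rho''_r$, and likewise for $\rho''$, giving equality at level $r+1$. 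I expect this homological bookkeeping — proving $\mathrm{Im}$ of a spectral-sequence map passes correctly through $E_r\rightsquigarrow E_{r+1}$, which genuinely needs the monomorphism hypothesis on $\rho'$ — to be the one place requiring real care rather than formal manipulation; it is the main obstacle.

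Finally, for the last sentence: if $\rho''_2$ is also a monomorphism, then since $\rho'_r\tau_r=\rho''_r$ with $\rho''_r$ injective (injectivity propagates to all $r$ by the same homology-of-a-subcomplex argument applied to $\rho''$, using that its image equals that of $\rho'$ which is injective, hence $\rho''$ is injective on each page), $\tau_r$ is injective; and $\tau_r$ is surjective because any $w\in E'_r$ satisfies $\rho'_r(w)\in\mathrm{Im}\,\rho'_r=\mathrm{Im}\,\rho''_r$, so $\rho'_r(w)=\rho''_r(z)=\rho'_r(\tau_r z)$ for some $z$, whence $w=\tau_r z$ by injectivity of $\rho'_r$. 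Thus $\tau_r$ is an isomorphism of spectral sequences for $r\geq 2$, and $\rho''_r=\rho'_r\tau_r$ is a composite of monomorphisms, hence a monomorphism.
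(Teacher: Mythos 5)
Your overall plan matches the paper's: define $\tau_2$ by lifting along the injection $\rho'_2$, using $\mathrm{Im}\,\rho'_2=\mathrm{Im}\,\rho''_2$; check $\tau_2$ commutes with $d_2$; and pass to homology to get the higher $\tau_r$. The chain-map verification via $\rho'_r(d'_r\tau_r z)=\rho'_r(\tau_r d''_r z)$ and injectivity of $\rho'_r$ is exactly the paper's argument.

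However, you flagged the right worry and it is a genuine gap: the inductive claim (a), that $\mathrm{Im}\,\rho'_r=\mathrm{Im}\,\rho''_r$ for all $r\geq 2$, is \emph{false} in general. Your justification (``$\mathrm{Im}\,\rho''_{r+1}$ is the $d_r$-homology of $\mathrm{Im}\,\rho''_r$'') silently uses injectivity of $\rho''$, which you do not have. A concrete counterexample: put $E_r=k\langle a\rangle$ with all $d_r=0$; put $E'_r=k\langle a'\rangle$, $\rho'_r(a')=a$, $d'_r=0$; and put $E''_2=k\langle a'',c''\rangle$ with $\rho''_2(a'')=a$, $\rho''_2(c'')=0$, $d''_2(a'')=c''$. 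Then (1) and (2) hold, but $E''_3=0$ so $\mathrm{Im}\,\rho''_3=0\neq\mathrm{Im}\,\rho'_3$. The fix is simply to abandon (a) for $r>2$: you already produce $\bar\tau_{r+1}=H(\tau_r)$ and verify $\rho'_{r+1}\circ\bar\tau_{r+1}=\rho''_{r+1}$ directly by functoriality of homology; this \emph{is} the unique $\tau_{r+1}$ (uniqueness only needs injectivity of $\rho'_{r+1}$, and existence never needed the full image equality). That is precisely what the paper does — there is no per-level well-definedness argument past $r=2$. For the final sentence, the clean route is also simpler than what you wrote: if $\rho''_2$ is injective then $\tau_2$ is injective (since $\rho'_2\tau_2=\rho''_2$) and surjective (by image equality at $r=2$), hence an isomorphism of chain complexes, so each $\tau_r=H^{r-2}(\tau_2)$ is an isomorphism and $\rho''_r=\rho'_r\tau_r$ is a monomorphism; no separate propagation of image equality or of injectivity of $\rho''$ is needed.
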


\begin{proof}
We define $\tau_2(x'')$ by 
\[
\rho'_2 (\tau_{2}(x''))=\rho_2'' (x'').
\]
Since $\mathrm{Im}\thinspace \rho_2'=\mathrm{Im}\thinspace \rho_2''$ and $\rho_2'$ is a monomorphism, 
it is well-defined and we have 
\[
\rho_2' \circ \tau_2=\rho_2''
\]
at the $E_2$-level.
Suppose that we have 
\[
\rho_r' \circ  \tau_r=\rho_r''.
\]
Then, we want to show that 
\[
d_r' (\tau_r(x''))=\tau_r(d_r''(x'')).
\]
Since $\rho_r'$ is  a monomorphism, it suffices to show that 
\[
\rho'_r (d_r'( \tau_r(x'')))=\rho'_r( \tau_r (d_r''(x''))).
\]
It is easily verified as follows:
\begin{align*} 
\rho'_r (d_r'( \tau_r (x''))) & = d_r (\rho'_r (\tau_r(x'') ))\\
&=d_r (\rho''_r (x'')) \\
&= \rho''_r (d_r'' (x'')) \\
& =\rho_r ' (\tau_r (d_r'' (x''))).
\end{align*}
Then, $\tau_r$ induces a homomorphism 
\[
\tau_{r+1}:\thinspace E''_{r+1} \to E'_{r+1}
\]
such that 
\[
\rho'_{r+1} \circ \tau_{r+1}=\rho''_{r+1}.
\]
Continue this process, we have a homomorphism of spectral sequence 
\[
\tau_r:\thinspace E_{r}'' \to E_r'
\]
for $r\geq 2$. It is clear that if $\rho_2''$ is a monomorphism, then $\tau_2$ is an isomorphism.
It is also clear from the construction  that $\rho_r''$ is a monomorphism for $r\geq 2$ and $\tau_r$ is an isomorphism for $r\geq 2$.
\end{proof}


Now, we prove Theorem~\ref{Theorem:main} assuming Lemma~\ref{Lemma:mono}.

\begin{proof}[Proof of Theorem~\ref{Theorem:main}]
Let $k=\dim G$.
Let $q_j=p^{e_1^{j}}$ for $j=0, \dots, k$.
Let  $q=p^{ab}={q_k}^{a e_2}$ ($a\geq 1$).
Let $X=BG^{\wedge}$, $A=BG(\mathbb{F}_{q_k})$, $\alpha=D_{q_k}$, $f=\phi^{q}$,
$A_j=BG(\mathbb{F}_{q_j})$ and $\alpha_i=D_{q_j}$ for $j=0,1,\dots, k$.
In order to prove Theorem~\ref{Theorem:main},  we consider the Leray-Serre spectral sequence $E_r(\Loop_f X)$, $E_r(\Loop X)$ and establish an isomorphism of spectral sequences $\tau:\thinspace E_r(\Loop X) \to E_r(\Loop_f X)$. 

By Lemma~\ref{Lemma:zero}, we have that the induced homomorphism
\[
\ReducedCohomology{\HomotopyFibre{\alpha_j}}\to \ReducedCohomology{\HomotopyFibre{\alpha_{j-1}}}
\]
is zero for $j=1, \dots, k$.
By Lemma~\ref{Lemma:mono}, we have
a monomorphism
\[
(1\times \psi)^*\circ p_1^*:E_r(\Loop_f X) \longrightarrow E_r(\Loop_f X \times_{X} P_{\alpha}).
\]
The fibres of fibrations $\pi_0:\Loop_f X \to X$, $\pi_0:\Loop X \to X$, $\pi_0:\Loop_f X \times_{X} P_{\alpha} \to X$ are $\Omega X$, $\Omega X$, $\Omega X \times \HomotopyFibre{\alpha}$, respectively.
Identifying the $E_2$-terms $E_2(\Loop_f X)$, $E_2(\Loop X)$, $E_2(\Loop_f X \times_{X} P_{\alpha})$ of Leray-Serre spectral sequences with $\Cohomology{X} \otimes \Cohomology{\Omega X}$, $\Cohomology{X} \otimes \Cohomology{\Omega X}$, $\Cohomology{X}\otimes \Cohomology{\Omega X} \otimes \Cohomology{\HomotopyFibre{\alpha}}$, respectively, we have
\begin{align*}
(1 \times \psi)^*(p_1^*(x \otimes y ))&= x \otimes y \otimes 1\quad \text{and}\\
(1\times \psi)^*(\varphi^*(x\otimes y))&= \sum x \otimes y' \otimes \psi^*( \chi(y'')),
\end{align*}
where $\varphi^{*}(y)=\sum y' \otimes \chi(y'')$ and  $\chi:\Cohomology{\Omega X} \to \Cohomology{\Omega X}$ is the canonical anti-automorphism of connected Hopf algebra over $\mathbb{Z}/\ell$.
By the definition of $f$, we have 
$f=(\phi^{q_k})^{ae_2}$ and $ae_2$ is divisible by $e_2$. So, by Lemma~\ref{Lemma:identity}, 
the induced homomorphism
$(1\times (1\times f))^*$ is the identity homomorphism.
By Lemma~\ref{Lemma:same}, 
we obtain 
$$\mathrm{Im}\, (1 \times \psi)^* \circ p_1^*=\mathrm{Im}\, (1 \times \psi)^*\circ \varphi^*$$
in the $E_2$-term $E_2(\Loop_f X \times_{X} P_{\alpha})=H^{*}(X) \otimes \Cohomology{\Omega X} \otimes \Cohomology{\HomotopyFibre{\alpha}}$.
Therefore, using Lemma~\ref{Lemma:ss}, we obtain an isomorphism between Leray-Serre spectral sequences $E_r(\Loop_f X)$ and $E_r(\Loop X)$.
\end{proof}


\section{Proof of Lemma~\ref{Lemma:mono}}\label{sec4}


In order to prove Lemma~\ref{Lemma:mono}, 
we need to recall the internal structure of Leray-Serre spectral sequence. 
Let $\eta:\thinspace Z\to Y$, $\xi:\thinspace Y \to X$ be fibrations.
Without loss of generality, we may assume that $X$ is a $CW$ complex.
Suppose $X$ is a CW complex and denote its $n$-skelton by $X^{(n)}$. We denote $\xi^{-1}(X^{(n)}) \subset Y$, 
$\eta^{-1}(\xi^{-1}(X^{(n)}))$  by $F_nY$, $F_n Z$, respectively and let $F_nY=F_{n}Z=\emptyset$ for $n<0$.
For the sake of notational simplicity, we let 
\begin{align*}
M_{m,n}(Y)&=\Cohomology{F_{m}Y, F_nY}, \\
 M_{m,n}(Z)&=\Cohomology{F_{m}Z, F_{n}Z},
\end{align*}
respectively.
We denote by $E_r(Y)$, $E_r(Z)$ the Leray-Serre spectral sequences associated with fibrations $\xi$, $\xi\circ \eta$, respectively.

\begin{lemma} \label{Lemma:monoA}
Suppose that for $m\geq n \geq 0$, the induced homomorphism 
\[
\eta^*:\thinspace M_{m,n}(Y)\longrightarrow M_{m,n}(Z)
\]
is a monomorphism. Then, the induced homomorphism
\[
\eta^{*}:\thinspace  E_r(Y) \to E_r(Z)
\]
is also a monomorphism for $r\geq 2$.
\end{lemma}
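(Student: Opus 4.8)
The plan is to reduce the statement to the standard skeletal description of the Leray--Serre spectral sequence together with a short diagram chase. Recall that, for $r\geq 1$ and with the convention $F_jY=\emptyset$ for $j<0$, the term $E_r^{n}(Y)$ in filtration degree $n$ is a subquotient
\[
E_r^{n}(Y)=Z_r^{n}(Y)/B_r^{n}(Y),\qquad Z_r^{n}(Y),\ B_r^{n}(Y)\subseteq M_{n,n-1}(Y),
\]
where $Z_r^{n}(Y)=\mathrm{Im}\bigl(M_{n+r-1,\,n-1}(Y)\to M_{n,n-1}(Y)\bigr)$ is the image of the restriction map attached to the triple $F_{n-1}Y\subset F_nY\subset F_{n+r-1}Y$, and $B_r^{n}(Y)=\mathrm{Im}\bigl(\delta\colon M_{n-1,\,n-r}(Y)\to M_{n,n-1}(Y)\bigr)$ is the image of the connecting homomorphism of the triple $F_{n-r}Y\subset F_{n-1}Y\subset F_nY$; the exact sequence of that last triple also shows $B_r^{n}(Y)=\ker\rho_Y$, where $\rho_Y\colon M_{n,n-1}(Y)\to M_{n,n-r}(Y)$ is the evident restriction. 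Since $\eta\colon Z\to Y$ is a map of fibrations over $X$, it carries the pairs $(F_mZ,F_nZ)$ into $(F_mY,F_nY)$ and hence induces homomorphisms $\eta^{*}\colon M_{m,n}(Y)\to M_{m,n}(Z)$ compatible with all restriction maps and all connecting homomorphisms; so $\eta^{*}$ sends $Z_r^{n}(Y)$ into $Z_r^{n}(Z)$ and $B_r^{n}(Y)$ into $B_r^{n}(Z)$, and the map $\eta^{*}\colon E_r(Y)\to E_r(Z)$ of the lemma is the induced map of subquotients.

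Granting this, the argument I would give is as follows. Fix $r\geq 2$ and $n$. Because $\eta^{*}\colon M_{n,n-1}(Y)\to M_{n,n-1}(Z)$ is a monomorphism (the hypothesis with $m=n$), $\eta^{*}$ is injective on $Z_r^{n}(Y)$, so an element $z\in Z_r^{n}(Y)$ represents a class in the kernel of $E_r^{n}(Y)\to E_r^{n}(Z)$ only if $\eta^{*}(z)\in B_r^{n}(Z)$. For such a $z$, exactness on the $Z$-side gives $\rho_Z(\eta^{*}(z))=0$, hence by naturality $\eta^{*}(\rho_Y(z))=0$ in $M_{n,n-r}(Z)$. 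Since $\eta^{*}\colon M_{n,n-r}(Y)\to M_{n,n-r}(Z)$ is again a monomorphism by hypothesis, $\rho_Y(z)=0$, i.e.\ $z\in\ker\rho_Y=B_r^{n}(Y)$, so the class of $z$ is already zero in $E_r^{n}(Y)$. Thus $E_r^{n}(Y)\to E_r^{n}(Z)$ is injective in every filtration degree and every total degree; as $\eta^{*}$ commutes with the differentials (being induced by the single map $\eta$), this yields a monomorphism $\eta^{*}\colon E_r(Y)\to E_r(Z)$ for $r\geq 2$. At the bottom of the filtration ($0\le n<r$, where $M_{n,n-r}$ and $M_{n-1,n-r}$ involve $F_{-1}Y=\emptyset$) the same chase applies, using in addition that $\eta^{*}\colon\Cohomology{F_nY}\to\Cohomology{F_nZ}$ is a monomorphism --- the ``$M_{n,-1}$'' case --- which is covered by the hypothesis once it is read with the convention $F_{-1}Y=\emptyset$.

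The part I expect to be routine but delicate is the first paragraph: pinning down the internal description of $E_r(Y)$, and in particular the exactness that identifies $B_r^{n}(Y)$ with $\ker\rho_Y$, since everything afterwards is the two-line chase. What makes the lemma more than a formality --- and where the full strength of the hypothesis is used --- is the step ``$\eta^{*}(\rho_Y(z))=0\Rightarrow\rho_Y(z)=0$'', which lives in $M_{n,n-r}(Y)$ with $n-(n-r)=r\geq 2$: injectivity of $\eta^{*}$ merely on the associated exact couple (equivalently, on $E_1(Y)=\bigoplus_n M_{n,n-1}(Y)$ and on the groups $\Cohomology{F_nY}$) would not force injectivity on the derived couples, so one genuinely needs injectivity of $\eta^{*}$ on the higher relative groups $M_{m,n}(Y)$ with $m-n\geq 2$ as well.
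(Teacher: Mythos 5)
Your proof is correct and takes essentially the same route as the paper: both reduce to the standard description $E_r=Z_r/B_r$ with $Z_r,B_r\subseteq M_{s,s-1}$, and both run the same diagram chase through the long exact sequence of the triple $(F_sY,F_{s-1}Y,F_{s-r}Y)$, using injectivity of $\eta^{*}$ on $M_{s,s-r}$ (your $\rho_Y(z)=0$ step, the paper's $j_r''(x)=0$) to conclude $x\in B_r(Y)$. Your closing observation about why injectivity on the higher relative groups $M_{m,n}$ with $m-n\geq 2$ is genuinely needed is a worthwhile addition that the paper leaves implicit.
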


\begin{proof}
Let us consider  the following diagram:
\[
\begin{diagram}
\node{\Cohomology{Y, F_{s-r}Y} } \node[2]{M_{s+r,s}(Y)} \\
\node{\Cohomology{Y,F_{s-1}Y} } \arrow{n,l}{j_r} \arrow{e,t}{i_1} \node{M_{s,s-1}(Y) } \arrow{ne,t}{\delta_{r}'} \arrow{e,t}{\delta_1} \node{\Cohomology{Y,F_sY}} \arrow{n,r}{i_r} \\
\node{M_{s-1,s-r}(Y) } \arrow{n,l}{\delta_r} \arrow{ne,b}{\delta_{r}''} \node[2]{\Cohomology{Y,F_{s+r}Y}. } \arrow{n,r}{j_r}
\end{diagram}
\]
Let 
$
Z_r(Y) = \mathrm{Ker}\thinspace \delta_{r}' $ and
$B_r(Y) = \mathrm{Im}\thinspace \delta_{r}''. $
Then, there holds $
E_r(Y) = Z_r(Y)/B_r(Y)
$
for $r\geq 2$.
See standard text books, for instance, McCleary's book \cite{mccleary}, for detail. 
We consider the same diagram and $Z_r(Z)$, $B_r(Z)$ for $Z$ in the same manner. Then, we have $E_r(Z)=Z_r(Z)/B_r(Z)$.
 Thus,
in order to prove the injectivity of the induced homomorphism $\eta^*:E_r(Y) \to E_r(Z)$, 
it suffices to show that 
\[
{\eta}^{*}(Z_{r}(Y)) \cap B_r(Z )= {\eta}^*(B_r(Y)), 
\]
that is, if ${\eta}^{*}(x) \in \mathrm{Im}\thinspace \delta_{r}''$ in 
$M_{s,s-1}(Z)$, then
$x\in \mathrm{Im}\thinspace \delta_{r}''$ in $M_{s, s-1}(Y)$.
We consider the following diagram:
\[
\begin{diagram}
\node{ M_{s-1,s-r}(Z)} 
\arrow{e,t}{\delta_r''} 
\node{M_{s,s-1} (Z) } 
\arrow{e,t}{j_{r}''} 
\node{ M_{s,s-r}(Z) } 
\\
\node{M_{s-1,s-r}(Y) } \arrow{n,r}{\eta^*} \arrow{e,t}{\delta_{r}''} 
\node{M_{s,s-1} (Y) } \arrow{n,r}{\eta^*} \arrow{e,t}{j_{r}''} \node{M_{s,s-r}(Y),}\arrow{n,r}{\eta^*}
\end{diagram}
\]
where horizontal sequences are cohomology long exact sequences associated with triples $(F_{s} Z, F_{s-1}Z, F_{s-r}Z)$, $(F_sY, F_{s-1}Y, F_{s-r}Y)$.
Suppose $\eta^*(x) \in \mathrm{Im}\thinspace \delta_{r}''$, then $j''_{r}(\eta^{*}(x))=0$. So, we have that $\eta^* (j_{r}''(x))=0$. Since $\eta^*$ is a monomorphism, we have $j''_{r}(x)=0$. Hence, we have $x \in \mathrm{Im}\thinspace \delta_r''$. This completes the proof.
\end{proof}


Now, we complete the proof of Lemma~\ref{Lemma:mono}.
Recall that $P_{\alpha_j} \to X$ is the fibration with the fibre $\HomotopyFibre{\alpha_{j}}$.
Let $Y_{j}=Y \times_{X} P_{\alpha_{j}}$ for $j=0,\dots, k$.  There is a sequence of fibrations and fibre maps over $X$,
\[
\begin{diagram}
\node{Y_0} \arrow{e,t}{i'_0} \node{Y_1}  \arrow{e,t}{i'_1} \node{\dots} \arrow{e,t}{i'_{k-1}} \node{Y_k}\arrow{e,t}{i'_k}  \node{Y\times_{X} P_{\alpha}} \arrow{e,t}{p_1} \node{Y,}\end{diagram}
\]
where $i'_j=1\times i_j \times 1$ for $j=0, \dots, k$.
We denote the projection onto the first factor from $Y_j$ to $Y$ by $\eta_j$.
The following diagram is a fibre square and the fibre of $\eta_{j}$ is also $\HomotopyFibre{\alpha_{j}}$.
\[
\begin{diagram}
\node{Y_j} \arrow{s,l}{\eta_{j}} \arrow{e,t}{p_2} \node{P_{\alpha_j}} \arrow{s,r}{\pi_0} \\
\node{Y} \arrow{e,t}{\xi} \node{X.}
\end{diagram}
\]

\begin{proposition}\label{Proposition:monoB}
Let $Y''\subset Y' \subset Y$ be subspaces of $Y$ and let $Y''_{j}=\eta_{j}^{-1}(Y'')$, $Y'_j=\eta_{j}^{-1}(Y')$ be subspaces of $Y_j$.
Suppose that $X$ is simply connected and that $H^{i}(\HomotopyFibre{\alpha})=\{0\}$ for $i >k$.
The induced homomrphism 
\[
\Cohomology{Y',Y''} \to \Cohomology{Y'\times_{X} \MappingTrack{\alpha}, Y''\times_{X} \MappingTrack{\alpha}}
\]
is a monomorphism.
\end{proposition}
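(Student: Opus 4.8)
The plan is to realize the homomorphism of Proposition~\ref{Proposition:monoB} as the edge homomorphism onto the bottom row of a relative Leray--Serre spectral sequence, and then to prove that this edge homomorphism is injective by showing that no differential can reach that row. The fibration $Y'\times_X\MappingTrack{\alpha}\to Y'$, together with its restriction $Y''\times_X\MappingTrack{\alpha}\to Y''$, is pulled back from $\pi_0\colon\MappingTrack{\alpha}\to X$ along the structure map $Y'\to X$; since $X$ is simply connected the local coefficient system is trivial, so we get a relative spectral sequence
\[
E_2^{s,t}=H^s(Y',Y'')\otimes H^t(\HomotopyFibre{\alpha})\ \Longrightarrow\ \Cohomology{Y'\times_X\MappingTrack{\alpha},\,Y''\times_X\MappingTrack{\alpha}}.
\]
The map of the proposition is exactly the composite $H^s(Y',Y'')=E_2^{s,0}\twoheadrightarrow E_\infty^{s,0}\hookrightarrow \Cohomology{Y'\times_X\MappingTrack{\alpha},\,Y''\times_X\MappingTrack{\alpha}}$, so it is a monomorphism as soon as $E_2^{s,0}\to E_\infty^{s,0}$ is an isomorphism, i.e. as soon as every differential $d_r\colon E_r^{s-r,r-1}\to E_r^{s,0}$ into the row $t=0$ vanishes. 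Because $H^t(\HomotopyFibre{\alpha})=\{0\}$ for $t>k$, the source of such a differential is zero unless $r-1\le k$, so only $d_2,\dots,d_{k+1}$ are in play.

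Next I would bring in the chain. For $0\le j\le k-1$ the fibre map $\HomotopyFibre{\alpha_j}\to\HomotopyFibre{\alpha}$ underlying the composite $Y_j\to Y\times_X\MappingTrack{\alpha}$ factors through $\HomotopyFibre{\alpha_j}\to\HomotopyFibre{\alpha_{j+1}}$, which induces zero on reduced cohomology (this is the hypothesis on the maps $i_j$ in Lemma~\ref{Lemma:mono}); hence $\HomotopyFibre{\alpha_j}\to\HomotopyFibre{\alpha}$ induces zero on reduced cohomology as well. Put $Z_m=Y_m$ for $0\le m\le k-1$ and $Z_k=Y\times_X\MappingTrack{\alpha}$; using $i'_0,\dots,i'_{k-2}$ together with $i'_k\circ i'_{k-1}$ we obtain a chain of fibre maps over $Y$
\[
Z_0\longrightarrow Z_1\longrightarrow\cdots\longrightarrow Z_k
\]
with exactly $k$ links, each inducing zero on the reduced cohomology of the fibre. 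Restricting everything over $(Y',Y'')$ and passing to relative Leray--Serre spectral sequences $E_\bullet(Z_m)$, every link $Z_{m-1}\to Z_m$ yields a morphism $E_\bullet(Z_m)\to E_\bullet(Z_{m-1})$ which at $E_2$ is $\mathrm{id}_{H^*(Y',Y'')}$ tensored with the map induced on cohomology of fibres; in particular it is an isomorphism on the row $t=0$ (the fibres being connected) and is zero on every row $t\ge1$.

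The core is then a double induction. I claim that for each $m=0,1,\dots,k$ the differential $d_\rho\colon E_\rho^{s-\rho,\rho-1}(Z_m)\to E_\rho^{s,0}(Z_m)$ vanishes for all $s$ whenever $2\le\rho\le m+1$. For $m=0$ this range of $\rho$ is empty. Assuming the claim for $Z_{m-1}$, fix $\rho$ with $2\le\rho\le m+1$ and induct on $\rho$: the inner hypothesis gives that all earlier differentials into the row $t=0$ of $E_\bullet(Z_m)$ vanish, so $E_\rho^{s,0}(Z_m)=H^s(Y',Y'')$, and the outer hypothesis, using $\rho-1\le m$, gives that all differentials into the row $t=0$ of $E_\bullet(Z_{m-1})$ on pages $<\rho$ vanish, so $E_\rho^{s,0}(Z_{m-1})=H^s(Y',Y'')$ and the comparison map is an isomorphism there. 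Then for $y\in E_\rho^{s-\rho,\rho-1}(Z_m)$ the comparison map kills $y$ (it is already zero on the row $t=\rho-1\ge1$ at $E_2$, hence on $E_\rho$), so $d_\rho y$ maps to $d_\rho 0=0$, and therefore $d_\rho y=0$ by the injectivity just noted. Taking $m=k$ annihilates all of $d_2,\dots,d_{k+1}$ into the row $t=0$ of $E_\bullet(Y\times_X\MappingTrack{\alpha})$, so $E_2^{s,0}\to E_\infty^{s,0}$ is an isomorphism and the proposition follows.

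I expect the main obstacle to be exactly this last bookkeeping: the comparison maps being isomorphisms on the bottom row at $E_2$ does \emph{not} by itself give injectivity on the bottom row at later pages $E_\rho$, because $E_\rho^{*,0}$ is merely a quotient of $E_2^{*,0}$ by the images of earlier differentials. This is what forces one to propagate the vanishing of the relevant boundary subgroups all the way down the chain $Z_0,\dots,Z_k$, and it is the reason the chain must have precisely $k$ links in order to offset the at most $k$ potentially nonzero differentials $d_2,\dots,d_{k+1}$ allowed by $H^{>k}(\HomotopyFibre{\alpha})=\{0\}$. Two more routine points need care: that the relative Leray--Serre spectral sequences, the triviality of their coefficient systems, and the identification of the edge homomorphism all go through for the (a priori arbitrary) subspaces $Y''\subset Y'$; and that the morphisms of spectral sequences induced by the fibre maps $Z_{m-1}\to Z_m$ are correctly computed on $E_2$.
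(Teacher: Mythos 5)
Your proof is correct and follows essentially the same strategy as the paper: set up the relative Leray--Serre spectral sequences, use the chain of comparisons $Y_j$ (whose maps kill the positive-degree rows at $E_2$), and deduce by a counting argument that no differential $d_2,\dots,d_{k+1}$ can hit the bottom row, while $H^{>k}(\HomotopyFibre{\alpha})=0$ rules out $d_{\ge k+2}$. The only cosmetic difference is that the paper runs the count backwards (assuming a nonzero $d_{r_k}$ and tracing it down the chain to force a strictly increasing sequence $2\le r_0<\cdots<r_k$, hence $r_k\ge k+2$, a contradiction), whereas you run a forward double induction establishing vanishing directly; these are two phrasings of the same argument.
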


\begin{proof}
We have  relative fibrations
\[
\HomotopyFibre{\alpha} \to (Y'\times_{X} \MappingTrack{\alpha}, Y''\times_{X} \MappingTrack{\alpha}) \to (Y', Y'') \quad \text{and}\quad \HomotopyFibre{\alpha_{j}} \to ( Y'_{j}, Y''_{j}) \to (Y', Y'')
\]
for $j=0,\dots,  k$.

There exist  associated Leray-Serre spectral sequences
\[
\quad E_r(Y'\times_{X} \MappingTrack{\alpha}, Y''\times_{X} \MappingTrack{\alpha}) \quad \text{and} \quad E_r(Y_{j}', Y''_{j}), 
\]
converging to 
\[
\gr \Cohomology{Y'\times_{X} \MappingTrack{\alpha}, Y''\times_{X} \MappingTrack{\alpha}}
\quad \text{and} \quad   \gr\Cohomology{Y_{j}', Y_{j}'' },
\]
for $j=0, \dots, k$, respectively.
The fundamental group $\pi_{1}(Y)$ acts on $\Homology{\HomotopyFibre{\alpha_j}}$. This action factors through $\pi_{1}(X)=\{0\}$. Therefore, the action of $\pi_{1}(Y)$ on $\Homology{\HomotopyFibre{\alpha_j}}$ is trivial. Hence, its action on $\Cohomology{\HomotopyFibre{\alpha_j}}$ is also trivial. Thus, the $E_2$-term of the Leray-Serre spectral sequence associated with 
the relative fibration 
\[
\HomotopyFibre{\alpha_{j}} \to (Y'_{j}, Y''_{j}) \to (Y', Y''),
\]
is 
\[
E_2(Y_{j}',Y_{j}'')=\Cohomology{Y',Y''} \otimes \Cohomology{\HomotopyFibre{\alpha_{j}}}.
\]
Suppose that $d_{r_{k}}(y)=z$ in $E_{r_k}^{s,0}(Y'\times_{X} \MappingTrack{\alpha},Y''\times_{X} \MappingTrack{\alpha})$ for some $y \in E_{r_k}^{s-r_k, r_{k}-1}(Y'\times_{X} \MappingTrack{\alpha},Y''\times_{X} \MappingTrack{\alpha})$. 
Let $z_k={i'_{k}}^*(z)$ and $y_k={i'_{k}}^*(y)$.
Since $r_k-1>0$, we have ${i'_{k-1}}^*(y_k)=0$. Therefore,  ${i'_{k-1}}^{*}(z_{k})$ in $E_{r_k}^{s,0}(Y'_{k-1}, Y''_{k-1})$ is also zero.
So, for some $r_{k-1}<r_{k}$, ${i'_{k-1}}^{*}(z_k)$ in $E_{r_{k-1}}^{s,0}(Y'_{k-1}, Y''_{k-1})$ must be hit, that is,  there exists $y_{k-1}$ in $E_{r_{k-1}}^{s-r_{k-1},r_{k-1}-1}(Y_{k-1}', Y_{k-1}'')$  such that $d_{r_{k-1}}(y_{k-1})=i_{k-1}^*(z_{k})$ in $E_{r_{k-1}}^{s,0}(Y'_{k-1}, Y''_{k-1})$. 
Continuing this precess, we have a sequence of integers
\[
2 \leq r_0<r_1<\dots < r_k.
\]
Hence, we have $r_k\geq k+2$. However, $d_r=0$ for $r \geq k+2$ in $$E_{r}(Y'\times_{X} \MappingTrack{\alpha},Y''\times_{X} \MappingTrack{\alpha}).$$
It is a contradiction. So, each element in $H^{s}(Y',Y'')=E_{r}^{s,0}(Y' \times_{X} \MappingTrack{\alpha}, Y''\times_{X}\MappingTrack{\alpha})$ is not hit in $E_r(Y'\times_{X} \MappingTrack{\alpha},Y''\times_{X}\MappingTrack{\alpha})$ for $r\geq 2$. In other words, it is a permanent cocycle.
Therefore, the induced homomorphism 
\[
\Cohomology{Y',Y''} \to \Cohomology{Y'\times_{X}\MappingTrack{\alpha}, Y''\times_{X}\MappingTrack{\alpha}}
\]
is a monomorphism.
\end{proof}

By Proposition~\ref{Proposition:monoB}, we have that the induced homomorphism $$\eta^*:M_{m,n}(Y) \to M_{m,n}(Z)$$ is a monomorphism
for $Z=Y \times_{X} P_{\alpha}$, $\eta=p_1:Y \times_{X} P_{\alpha} \to Y$.
Thus, Lemma~\ref{Lemma:monoA} completes the proof of Lemma~\ref{Lemma:mono}.


\end{document}